\documentclass[11pt]{amsart}

\usepackage[T1]{fontenc}
\usepackage{lmodern}
\usepackage{microtype}
\usepackage{amsmath,amssymb,amsthm,mathtools}
\usepackage{enumitem}
\usepackage[hidelinks]{hyperref}

\newtheorem{theorem}{Theorem}[section]
\newtheorem{lemma}[theorem]{Lemma}
\newtheorem{proposition}[theorem]{Proposition}
\newtheorem{corollary}[theorem]{Corollary}
\theoremstyle{definition}

\theoremstyle{remark}
\newtheorem{remark}[theorem]{Remark}

\newcommand{\R}{\mathbb{R}}
\newcommand{\C}{\mathbb{C}}

\newcommand{\Z}{\mathbb{Z}}
\newcommand{\GL}{\operatorname{GL}}
\newcommand{\Aut}{\operatorname{Aut}}
\newcommand{\Hom}{\operatorname{Hom}}
\newcommand{\vol}{\operatorname{vol}}
\newcommand{\Int}{\operatorname{int}}
\newcommand{\Sym}{\operatorname{Sym}}
\newcommand{\Lip}{\operatorname{Lip}}

\newcommand{\supp}{\operatorname{supp}}

\title[The Complex Banach Isometric Conjecture]{The Complex Banach Isometric Conjecture}

\author{Xinan Dai}
\thanks{Xinan Dai is currently a Ph.D. student at Fudan University and a visiting student at the AI for Scientific Simulation and Discovery Lab, Westlake University.}
\address{College of Future Information and Technology,
Fudan University, Shanghai, China}
\curraddr{Department of Artificial Intelligence,
School of Engineering, Westlake University, Hangzhou, China}
\email{xndai23@m.fudan.edu.cn}

\author{Wenhao Deng}
\thanks{Wenhao Deng is a student at the University of Glasgow and is currently an intern at the AI for Scientific Simulation and Discovery Lab, Westlake University.}
\address{University of Glasgow, Glasgow, United Kingdom}
\curraddr{Department of Artificial Intelligence,
School of Engineering, Westlake University, Hangzhou, China}
\email{dengwenhao@westlake.edu.cn}

\author{Yingdong Shi}
\address{School of Information Science and Technology, ShanghaiTech University, Shanghai, China}
\email{shiyd2023@shanghaitech.edu.cn}

\author{Tailin Wu}
\address{Department of Artificial Intelligence,
School of Engineering, Westlake University, Hangzhou, China}
\email{wutailin@westlake.edu.cn}

\author{Yuchen Yang}
\address{Department of Artificial Intelligence,
School of Engineering, Westlake University, Hangzhou, China}
\email{yangyuchen@westlake.edu.cn}

\date{August 15, 2026}
\subjclass[2020]{Primary 46C15, 52A21; Secondary 55R10, 55M25, 46B20}
\keywords{Banach's isometric conjecture, complex Banach space, convex body, principal bundle, Brouwer degree, Hermitian ellipsoid}

\begin{document}

\begin{abstract}
Banach asked whether a normed space must be Hilbert if, for one fixed dimension greater than one, all subspaces of that dimension are linearly isometric.  We prove the complex case.  By the codimension-one reduction, the essential finite-dimensional problem is to characterize a balanced convex body in a complex $(n+1)$-space whose complex hyperplane sections are all complex-linearly equivalent.  We show that such a body is a Hermitian ellipsoid.  The proof adapts the recent bundle--degree method of Lu and Yang for the real problem, with two features specific to the complex setting.  After covariance normalization, write $G=\Aut_{\C}(S)$ for the complex-linear symmetry group of a model section $S$.  The exact section maps form a principal $G$-bundle over $S^{2n+1}$; reduction to $G^{\circ}$ places its obstruction in $\pi_{2n}(G^{\circ})$, which is finite.  Pulling back by a suitable positive-degree self-map trivializes this bundle and yields a global Lipschitz family of exact complex-linear section maps.  Brouwer degree and a signed degree formula then imply that $p_S^{2n+2}$ and $p_S^{2n+4}$ are real homogeneous polynomials, where $p_S$ is the norm of a model section.  Unique factorization forces $p_S^2$ to be quadratic, and phase invariance makes the resulting quadratic form Hermitian.  The parallelogram identity then yields the general complex Banach-space statement.
\end{abstract}

\maketitle

\section{Introduction}\label{sec:intro}

Banach asked in 1932 whether the linear isometry type of one family of finite-dimensional subspaces can determine the norm of the whole space \cite[Chapter~XII, Remarks, property~(5), p.~244]{Banach1932}.  In the form considered here, the question is the following.

\begin{quote}
\emph{Let $X$ be a real or complex Banach space.  If, for some fixed integer $2\leq n<\dim X$, all $n$-dimensional linear subspaces of $X$ are linearly isometric, must $X$ be a Hilbert space?}
\end{quote}

A basic reduction explains why hyperplane sections are the natural endpoint.  Suppose the assertion is known whenever $\dim X=n+1$.  Given arbitrary $x,y\in X$, choose an $(n+1)$-dimensional subspace containing them.  Its $n$-dimensional hyperplanes are mutually isometric by hypothesis, so the codimension-one result makes the restricted norm Hilbertian.  The parallelogram identity therefore holds for $x$ and $y$, and hence on all of $X$.  Thus the problem is already decided by the case of mutually equivalent hyperplane sections of one finite-dimensional unit ball.

The known results progressively narrowed this endpoint.  Auerbach, Mazur and Ulam settled the real case $n=2$ \cite{AuerbachMazurUlam1935}.  Dvoretzky's theorem gives the infinite-dimensional real case \cite{Dvoretzky1959}, and Milman's complex form of the theorem gives the corresponding infinite-dimensional complex statement \cite{Milman1971}.  In finite dimensions, Gromov proved the conjecture for even $n$ over both fields; for odd $n$ he also treated real spaces when $\dim X\geq n+2$ and complex spaces when $\dim_{\C}X\geq2n$ \cite{Gromov1967}.  Later, Bor, Hern\'andez-Lamoneda, Jim\'enez-Desantiago and Montejano treated real $n\equiv1\pmod4$, with the possible exception $n=133$ \cite{BorEtAl2021}, and Ivanov, Mamaev and Nordskova settled the four-dimensional ambient real case \cite{IvanovMamaevNordskova2023}.  For complex spaces, Bracho and Montejano proved the conjecture for $n\equiv1\pmod4$ \cite{BrachoMontejano2021}; related characterizations of complex ellipsoids and complex symmetry were developed by Arocha, Bracho and Montejano \cite{ArochaBrachoMontejano2023}.  Consequently, after Gromov's high-dimensional result, the finite-dimensional complex cases not covered by these theorems occur for
\[
 n\equiv3\pmod4,
 \qquad
 n<\dim_{\C}X<2n.
\]
By the preceding reduction, it is enough to understand the codimension-one member $\dim_{\C}X=n+1$ of this range.

There have also been two very recent developments on the real problem.  Zhang has posted a preprint claiming a complete finite-dimensional solution by a different route based on global linear maps between lower-dimensional sections \cite{Zhang2026}.  Lu and Yang subsequently proved the remaining odd-dimensional real cases by a bundle--degree argument, completing the real conjecture together with Gromov's even-dimensional theorem \cite{LuYang2026}.  We use their globalization mechanism as the starting point rather than as a claimed new ingredient.  The complex contribution is the dimension shift that places the obstruction in an even homotopy group for every $n$, together with the complex-linear normalization and the final recovery of Hermitian structure.

To see the topological shift, let $V$ be a complex $(n+1)$-space and parametrize its complex hyperplanes by unit normals $u\in S^{2n+1}$.  After normalizing a model section by its covariance, its complex-linear symmetry group $G$ is a compact subgroup of a unitary group.  Reducing the section-isometry bundle to the identity component $G^{\circ}$ gives a class
\[
 [\mathcal P^{\circ}]\in\pi_{2n+1}(BG^{\circ})\cong\pi_{2n}(G^{\circ}).
\]
The group on the right is finite because $2n>0$ is even.  Hence the obstruction is torsion for every $n$.  A self-map of $S^{2n+1}$ of suitable positive degree kills it after pullback.  This is the reason the same globalization principle that requires odd section dimension in the real proof is available uniformly in the complex setting.

Two further points are genuinely complex.  First, the covariance normalization and all local and global section maps must remain complex linear.  For a balanced body, its real covariance commutes with multiplication by $i$, so its positive square root does as well.  Second, the degree argument lives on the underlying real spaces and initially produces a real quadratic form.  The invariance $p(ix)=p(x)$ then forces the associated real bilinear form to be $J$-invariant, which is precisely what is needed to recover a Hermitian form.

Our finite-dimensional statement is the following.

\begin{theorem}[complex hyperplane theorem]\label{thm:hyperplane}
Let $n\geq2$, let $V$ be a complex vector space of dimension $n+1$, and let $K\subset V$ be the closed unit ball of a complex norm.  Suppose that for every pair of complex hyperplanes $H_1,H_2\subset V$ there is a complex-linear isomorphism $T:H_1\to H_2$ such that
\[
 T(K\cap H_1)=K\cap H_2.
\]
Then $K$ is the unit ball of a positive-definite Hermitian form.
\end{theorem}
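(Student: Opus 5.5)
We follow the route sketched in the introduction. First comes the normalization. Identify $V$ with $\C^{n+1}$ and replace $K$ by $A K$, where $A$ is the positive square root of the inverse covariance matrix of the uniform measure on $K$, viewed on the underlying real space $\R^{2n+2}$. Since $K$ is balanced, $e^{i\theta}K=K$, so this covariance commutes with $J=i$, and hence so does $A$. Thus $A$ is complex linear, and the hypothesis of Theorem~\ref{thm:hyperplane} is preserved. We then do the same inside each section. For a unit normal $u\in S^{2n+1}$, let $H_u=u^{\perp_{\C}}$ and normalize the section $K\cap H_u$ by its own covariance, again a complex-linear operation. After this, any complex-linear map between two normalized sections lies in a unitary group. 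Fix a model $S$ and set $G=\Aut_{\C}(S)\subset U(n)$, a compact Lie group. The set of complex-linear isometries $T:\C^n\to H_u$ carrying $S$ onto the normalized section forms a principal $G$-bundle $\mathcal P\to S^{2n+1}$. Local triviality follows from continuity of sections and covariance in $u$, plus compactness of $G$; we would establish this via a slice or local section argument.

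The topological step goes as follows. Reduce $\mathcal P$ to $G^{\circ}$; this is possible because $S^{2n+1}$ is simply connected, so the associated $\pi_0(G)$-cover is trivial. The reduced bundle is classified by an element of $\pi_{2n}(G^{\circ})$. This group is finite, since the rational homotopy of a compact connected Lie group is concentrated in odd degrees. Let $N$ be its order and pick $f:S^{2n+1}\to S^{2n+1}$ of degree $N$, which we may take Lipschitz. Then $f^*\mathcal P^{\circ}$ is trivial, which gives a global Lipschitz family $T_v:\C^n\to H_{f(v)}$, $v\in S^{2n+1}$, with $T_v(S)=$ the normalized section at $f(v)$.

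Next comes the degree step, which we expect to be the main obstacle. Compute integrals such as $\int_{\R^{2n+2}} q(x)\,p_K(x)^{-m}\,dx$ for polynomial test functions $q$ in two ways. One way uses polar decomposition along the family of sections, parametrized via $v$. The other uses the change of variables $(v,y)\mapsto T_v y$ combined with Brouwer degree $N$, so that the signed degree formula counts each point of the cone $N$ times. Because every section is a linear image of the same $S$, the $y$-integrals factor through fixed moments of $p_S$. The degree formula then shows that the Fourier or Radon-type transform of $p_K^{-m}$ restricted appropriately is polynomial. Equivalently, $p_S^{2n+2}$ and $p_S^{2n+4}$ are real homogeneous polynomials on $\R^{2n}$; the exponents arise from the sectional volume and the second moment. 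The delicate points are making the Lipschitz change of variables rigorous where $T_v$ is only Lipschitz, and tracking the covariance normalization factors. Our first attempt would be to integrate against compactly supported smooth radial cutoffs and apply the area formula.

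Finally, the algebraic step. Put $P=p_S^{2n+2}$ and $Q=p_S^{2n+4}$, so that $P^{n+2}=Q^{n+1}$ in $\R[x]$. By unique factorization and $\gcd(n+1,n+2)=1$, there is a polynomial $R$ with $P=R^{n+1}$ and $Q=R^{n+2}$, so $R=p_S^2$ is a quadratic form, positive definite since $S$ is a bounded convex body. Phase invariance $p_S(ix)=p_S(x)$ makes the associated bilinear form $J$-invariant, so $p_S^2(x)=h(x,x)$ for a Hermitian form $h$. Hence every section of $K$ is a Hermitian ellipsoid. Applying this to all complex hyperplanes, the parallelogram identity holds for every pair $x,y$, since any such pair lies in some hyperplane because $n\geq2$. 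It therefore holds on $V$, and the Jordan--von Neumann theorem together with phase invariance gives that $K$ is a Hermitian ellipsoid.
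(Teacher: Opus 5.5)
Your outline follows the paper's architecture: complex-linear covariance normalization, the principal $G$-bundle over $S^{2n+1}$, reduction to $G^{\circ}$, finiteness of $\pi_{2n}(G^{\circ})$, pullback by a degree-$N$ map, unique factorization, Hermitian return, and the parallelogram law. However, the step you flag as the main obstacle is not carried out, and the mechanism you sketch cannot supply it.

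\textbf{The degree step.} There are three problems with your sketch:
\begin{enumerate}
\item The integral $\int_{\R^{2n+2}}q\,p_K^{-m}$ is not absolutely convergent for any nonzero polynomial $q$, since $p_K^{-m}$ is homogeneous of degree $-m$ on $\R^{m}$.
\item The map $(v,y)\mapsto T_vy$ goes from a real $(4n+1)$-dimensional space to $\R^{2n+2}$, so it has no Brouwer degree.
\item Once $y$ is integrated out against moments of $p_S$, only constants remain, so no pointwise statement about $p_S(y)$ can follow.
\end{enumerate}
The claimed ``equivalence'' with polynomiality of $p_S^{2n+2}$ and $p_S^{2n+4}$ is therefore unsupported.

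The missing idea is to keep $y\in E\setminus\{0\}$ fixed. First correct your family to $A_z=C(K\cap H_{f(z)})^{1/2}T_z$, so that $A_z(S)=K\cap f(z)^{\perp}$ and $p_K(A_zy)=p_S(y)=:r$. Then $z\mapsto A_zy$ maps $S^{2n+1}$ into $r\,\partial K$. Because $A_zy\perp f(z)$, its radial normalization is homotopic to $f$ through $\cos(\pi t/2)f(z)+\sin(\pi t/2)A_zy/|A_zy|$, so it has degree $N$. The homogeneous extension $F_y(x)=\mathcal A(x)y$, with $\mathcal A(tz)=tA_z$, is Lipschitz on $B^{2n+2}$. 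It has Brouwer degree $N$ at every point of $\Int(rK)$ and degree $0$ outside $rK$. The signed degree formula then gives
\[
\int_{B^{2n+2}}|F_y(x)|^{2j}\det DF_y(x)\,dx=N\,p_S(y)^{2n+2+2j}\int_K|v|^{2j}\,dv .
\]
The left side is a homogeneous polynomial in $y$. Indeed, $DF_y(x)h=(D\mathcal A(x)h)\,y$ is linear in $y$, so $\det DF_y(x)$ is a polynomial of degree $2n+2$ in $y$ for a.e.\ $x$. Taking $j=0,1$ gives exactly the two powers you need. The exponents come from the volume and second moment of the dilate $rK$ in the ambient real $(2n+2)$-space, not from sectional quantities, which would scale like $r^{2n}$.

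\textbf{Lipschitz regularity.} The degree argument needs $z\mapsto A_z$ to be Lipschitz, and your proposal does not provide this. Topological triviality of $f^{*}\mathcal P^{\circ}$ yields only a continuous section, and a continuity or slice argument yields only continuous local sections. The paper handles this in two stages:
\begin{enumerate}
\item It builds a Lipschitz atlas using finitely many moment tensors whose $U(E)$-stabilizer is exactly $G$. This makes the normalized section determine its coset in $U(E)/G$ in a locally Lipschitz way.
\item It regularizes the continuous trivializing section without losing exactness. It approximates the section in the associated vector bundle with fiber $\operatorname{End}_{\R}(E_{\R})$, then retracts back using a $G^{\circ}$-equivariant Lipschitz tubular retraction onto $G^{\circ}$.
\end{enumerate}
Some such construction is required before the signed degree formula can be invoked.
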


The codimension-one reduction gives the Banach-space formulation immediately.

\begin{corollary}[complex Banach isometric conjecture]\label{cor:main}
Let $X$ be a complex Banach space.  If, for some fixed integer $2\leq n<\dim_{\C}X$, all complex $n$-dimensional linear subspaces of $X$ are mutually complex-linearly isometric, then $X$ is a complex Hilbert space.
\end{corollary}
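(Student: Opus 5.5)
The statement to prove is Corollary~\ref{cor:main}, which I will derive from Theorem~\ref{thm:hyperplane} by the codimension-one reduction described in the introduction. By the Jordan--von Neumann theorem (in its complex form, via the polarization identity), a complex normed space is an inner-product space exactly when its norm satisfies the parallelogram identity $\|x+y\|^2+\|x-y\|^2=2\|x\|^2+2\|y\|^2$ for all $x,y$. So it suffices to verify this identity for an arbitrary pair $x,y\in X$. Completeness of $X$ is given, so $X$ is then a Hilbert space.

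Fix $x,y\in X$ and choose a complex subspace $W\subset X$ with $\dim_{\C}W=n+1$ containing $x$ and $y$. This is possible because $n+1\geq 2$ and $n<\dim_{\C}X$: extend $\operatorname{span}_{\C}\{x,y\}$ (of dimension at most $2\leq n+1$) by further vectors of $X$. Let $K\subset W$ be the closed unit ball of the restricted norm; it is the unit ball of a complex norm on $W$. A complex hyperplane $H\subset W$ is a complex $n$-dimensional subspace of $X$. For any two such $H_1,H_2$ the hypothesis gives a complex-linear isometry $T:H_1\to H_2$. An isometry maps the unit ball onto the unit ball, so $T(K\cap H_1)=K\cap H_2$. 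Since $n\geq 2$, all hypotheses of Theorem~\ref{thm:hyperplane} hold for $(W,K)$.

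Theorem~\ref{thm:hyperplane} then says that $K$ is the unit ball of a positive-definite Hermitian form $h$ on $W$. Both $\|\cdot\|$ and $h(\cdot,\cdot)^{1/2}$ are norms on $W$ with the same unit ball, so they coincide by homogeneity of the Minkowski functional: $\|w\|^2=h(w,w)$ on $W$. The parallelogram identity for $h$ is a direct expansion, and it therefore holds for $x,y\in W$. As $x,y$ were arbitrary, it holds on all of $X$, and the norm comes from the inner product obtained by complex polarization.

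All of the difficulty lies in Theorem~\ref{thm:hyperplane}; the corollary itself has no real obstacle. The two points to state carefully are the choice of $W$ when $x,y$ are dependent (handled by extending to dimension exactly $n+1$) and the identification of the norm with the Hermitian form through equality of unit balls.
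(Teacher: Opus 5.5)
Your proposal is correct and matches the paper's proof of Corollary~\ref{cor:main}: you embed $\operatorname{span}_{\C}\{x,y\}$ in an $(n+1)$-dimensional subspace, apply Theorem~\ref{thm:hyperplane} to obtain the parallelogram identity there, and conclude with Jordan--von Neumann. You also spell out two points the paper leaves implicit, namely that an isometry carries unit-ball sections onto each other and that the norm is identified with the Hermitian form through their common unit ball.
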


The proof has three steps, each with a different role.  Fix a model hyperplane $E$ and a normalized model section $S\subset E$.  A finite collection of moment tensors detects the compact group
\[
 G=\Aut_{\C}(S)
\]
exactly.  These moments provide local coordinates on the orbit $U(E)/G$, and hence convert the pointwise existence of section isometries into a Lipschitz principal $G$-bundle over $S^{2n+1}$.  This first step is analytic rather than merely formal: the later Jacobian argument requires a family that is both exact and Lipschitz.

The second step is topological.  After reducing to $G^{\circ}$, a positive-degree pullback trivializes the bundle.  A continuous trivializing section is then regularized inside an associated vector bundle and retracted equivariantly to the exact orbit, producing a Lipschitz family $A_z$ with
\[
 A_z(S)=K\cap\phi(z)^{\perp}.
\]
The original bundle need not be trivial.  What matters is only that its obstruction is torsion: the pullback converts that obstruction into the integer $\deg\phi$, and this same integer reappears as the multiplicity in the degree formula below.

The final step turns this topological multiplicity into algebraic rigidity.  For fixed $0\neq y\in E$, the map $z\mapsto A_zy$ has degree $\deg\phi$ on the appropriate boundary.  Its homogeneous extension therefore has constant Brouwer degree on the interior of $p_S(y)K$.  With $m=2n+2$, the signed degree formula yields
\[
 p_S(y)^{m+2j}\in\R[E_{\R}]
 \qquad(j\geq0).
\]
Only $j=0,1$ are needed: they say that two consecutive powers of $p_S^2$ are polynomials.  Unique factorization then forces $p_S^2$ itself to be a quadratic polynomial.  This is the algebraic bottleneck of the proof; no explicit formula for the section maps is required once these two adjacent powers are known.  Phase invariance turns the quadratic form into a Hermitian one, and the parallelogram law finishes the argument.

Section~\ref{sec:prelim} records the convex-geometric, bundle-theoretic and degree-theoretic ingredients.  Section~\ref{sec:bundle} constructs and globalizes the exact complex isometry bundle.  Section~\ref{sec:degree} converts the global family into polynomial identities.  Section~\ref{sec:hermitian} proves quadratic and Hermitian rigidity, and Section~\ref{sec:ambient} returns to arbitrary complex Banach spaces.

\section{Preliminaries}\label{sec:prelim}

Throughout the finite-dimensional argument, $V$ is a complex vector space of dimension $n+1$, $n\geq2$, equipped with a fixed auxiliary Hermitian inner product $\langle\cdot,\cdot\rangle$, linear in the first variable.  We write $|v|^2=\langle v,v\rangle$, let $V_{\R}$ denote the underlying real Euclidean space, and write $Jv=iv$.  The associated real inner product is $\langle x,y\rangle_{\R}=\operatorname{Re}\langle x,y\rangle$.

If $T$ is an origin-symmetric convex body, $p_T$ denotes its Minkowski functional and $\rho_T(\theta)=p_T(\theta)^{-1}$ its radial function on the auxiliary unit sphere.  A subset of a complex vector space is \emph{balanced} if $e^{it}T=T$ for every $t\in\R$.  The unit ball of a complex norm is balanced, and therefore $JT=T$ and $p_T(e^{it}x)=p_T(x)$.

The finite-moment, bundle and degree constructions below follow the architecture of Lu and Yang \cite[Sections~2--4]{LuYang2026}, but are written in a form that preserves complex linearity.  We include the needed arguments because the changes of scalar field enter at several structural points rather than only in the final polarization step.

\subsection{Covariance and balanced normalization}

For a convex body $T$ in a real Euclidean $m$-space define its normalized covariance operator by
\[
C(T)=\frac{1}{\vol_m(T)}\int_T x\otimes_{\R}x\,dx,
\qquad
(x\otimes_{\R}x)y=\langle y,x\rangle_{\R}x.
\]
It is positive definite whenever $T$ has nonempty interior.

\begin{lemma}[linear covariance]\label{lem:covariance}
If $A$ is an invertible real-linear map, then
\[
C(AT)=AC(T)A^*.
\]
If, in addition, $T$ is balanced in a complex Euclidean space, then $C(T)J=JC(T)$.  Consequently $C(T)^{1/2}$ and $C(T)^{-1/2}$ are complex linear.
\end{lemma}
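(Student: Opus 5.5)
The plan is to prove the three assertions in turn: the transformation rule by a change of variables, the commutation with $J$ by applying that rule to $A=J$, and the complex linearity of the square roots from uniqueness of positive square roots.

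\textbf{Transformation rule.} Substitute $x=Ay$ in the integral over $AT$, so $dx=|\det A|\,dy$ and $\vol_m(AT)=|\det A|\vol_m(T)$. The Jacobian factors cancel between numerator and normalization, giving
\[
C(AT)=\frac{1}{\vol_m(T)}\int_T (Ay)\otimes_{\R}(Ay)\,dy .
\]
Next verify the rank-one identity $(Ay)\otimes_{\R}(Ay)=A\,(y\otimes_{\R}y)\,A^*$. Applying both sides to a vector $w$,
\[
\langle w,Ay\rangle_{\R}\,Ay=\langle A^*w,y\rangle_{\R}\,Ay = A\bigl((y\otimes_{\R}y)A^*w\bigr).
\]
Since $A$ and $A^*$ are fixed linear maps, they pass through the integral, which yields $C(AT)=AC(T)A^*$. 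Here $A^*$ denotes the adjoint with respect to $\langle\cdot,\cdot\rangle_{\R}$.

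\textbf{Commutation with $J$.} If $T$ is balanced, then $JT=T$. The map $J$ is orthogonal for $\langle\cdot,\cdot\rangle_{\R}=\operatorname{Re}\langle\cdot,\cdot\rangle$, because $|Jv|=|v|$. Hence $J^*=J^{-1}=-J$. Applying the transformation rule with $A=J$ gives
\[
C(T)=C(JT)=JC(T)J^{-1},
\]
that is, $C(T)J=JC(T)$.

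\textbf{Square roots.} Write $C=C(T)$, which is positive definite and symmetric for $\langle\cdot,\cdot\rangle_{\R}$. The operator $J C^{1/2}J^{-1}$ is again symmetric, because $J$ is orthogonal, and it is positive definite. Its square is $JCJ^{-1}=C$. By uniqueness of the positive square root, $JC^{1/2}J^{-1}=C^{1/2}$.

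An alternative route uses the spectral picture: $J$ preserves each eigenspace of $C$, and $C^{1/2}$ acts on each eigenspace as a scalar, so $C^{1/2}$ commutes with $J$. Either way, taking inverses also gives $JC^{-1/2}=C^{-1/2}J$. A real-linear map commuting with $J$ is complex linear, which completes the argument.

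No step is a real obstacle. The points that need care are checking that $J$ is orthogonal for the real inner product, so that $J^*=J^{-1}$, and invoking uniqueness of the positive square root rather than assuming that square roots commute with conjugation.
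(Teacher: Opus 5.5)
Your proposal is correct and follows the paper's proof. Both arguments use a change of variables in which the Jacobian factors cancel, then specialize to the orthogonal map $J$ with $J^*=J^{-1}=-J$. You derive the commutation of $C(T)^{\pm1/2}$ with $J$ from uniqueness of the positive square root, where the paper simply notes that these roots are functions of $C(T)$; this is the same argument with more detail.
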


\begin{proof}
The first identity follows from $x=Ay$ in the defining integral: the factor $|\det_{\R}A|$ occurs in both the numerator and the volume and cancels.  If $JT=T$, changing variables by the orthogonal map $J$ gives
\[
C(T)=JC(T)J^*.
\]
Since $J^*=-J$, this is equivalent to $C(T)J=JC(T)$.  The positive square root and its inverse are functions of $C(T)$ and therefore commute with $J$; see, for example, \cite[Chapter~1]{Bhatia2007}.
\end{proof}

\begin{lemma}[unitary normalization]\label{lem:compactgroup}
Let $E$ be a complex $n$-space and let $S\subset E$ be the unit ball of a complex norm with $C(S)=I_E$.  Then
\[
G:=\Aut_{\C}(S)=\{g\in\GL_{\C}(E):gS=S\}
\]
is a compact subgroup of $U(E)$.
\end{lemma}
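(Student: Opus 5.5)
The plan has three parts: show that every $g\in G$ is orthogonal for the real inner product on $E_{\R}$, deduce that $G\subset U(E)$, and then prove that $G$ is closed in $U(E)$, hence compact.

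For orthogonality, take $g\in G$. Then $gS=S$, and $g$ is in particular an invertible real-linear map of $E_{\R}$. By the first identity of Lemma~\ref{lem:covariance},
\[
I_E=C(S)=C(gS)=g\,C(S)\,g^*=gg^*,
\]
where $g^*$ is the adjoint with respect to $\langle\cdot,\cdot\rangle_{\R}$. So $g$ is real-orthogonal.

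Next, $g$ is complex linear, so it commutes with $J$. A real-orthogonal map commuting with $J$ preserves $\langle x,y\rangle_{\R}$ and also $\langle x,Jy\rangle_{\R}$. Since $\langle x,y\rangle=\langle x,y\rangle_{\R}+i\langle x,Jy\rangle_{\R}$ for the convention that the form is linear in the first variable, $g$ preserves the Hermitian form. Hence $G\subset U(E)$.

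For compactness, it suffices to show that $G$ is closed in the compact group $U(E)$; it is clearly a subgroup. Let $g_k\in G$ with $g_k\to g$ in $U(E)$. For each $x\in S$ we have $g_kx\in S$, and $S$ is closed, so $gx\in S$. This gives $gS\subset S$, and the same argument applied to $g_k^{-1}\to g^{-1}$ gives $g^{-1}S\subset S$. Therefore $gS=S$, so $g\in G$. As an alternative route, $G$ is the stabilizer in $U(E)$ of the Minkowski functional $p_S$, and $p_S$ is continuous, so $G$ is closed.

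None of these steps is a real obstacle. The only point needing care is the sign convention relating $\operatorname{Im}\langle x,y\rangle$ to $\langle x,Jy\rangle_{\R}$, and $J$-invariance is enough under either convention.
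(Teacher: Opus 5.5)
Your proof is correct and follows the paper's route: the covariance identity gives $I_E=C(gS)=gg^*$, so $g$ is unitary, and $G$ is closed in the compact group $U(E)$ because it is the stabilizer of the compact set $S$. You additionally spell out two steps the paper leaves implicit, namely that a real-orthogonal map commuting with $J$ preserves the Hermitian form, and the sequential argument that $G$ is closed.
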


\begin{proof}
For $g\in G$, Lemma~\ref{lem:covariance} gives
\[
I_E=C(S)=C(gS)=gg^*.
\]
Thus $g$ is unitary.  The stabilizer of the compact set $S$ is closed in $U(E)$, hence compact.
\end{proof}

We shall use the following elementary regularity estimate repeatedly.  If $aB\subset T\subset bB$ for the auxiliary Euclidean unit ball $B$, then
\begin{equation}\label{eq:gauge-lip}
|p_T(x)-p_T(y)|\leq a^{-1}|x-y|,
\qquad
|\rho_T(\theta)-\rho_T(\eta)|\leq \frac{b^2}{a}|\theta-\eta|.
\end{equation}
The first estimate follows from subadditivity of $p_T$; the second follows by applying the first to $1/p_T$ on the unit sphere, where $p_T\geq b^{-1}$.

\subsection{Finite moments and orbit coordinates}\label{subsec:moments}

For $j\geq1$, let $\Sym^j(E_{\R})$ be the $j$th symmetric tensor power and set
\[
M_j(T)=\frac{1}{\vol(T)}\int_T x^{\otimes j}\,dx\in\Sym^j(E_{\R}).
\]
The action of a real-linear map $A$ on $M_j(T)$ is the induced action $A^{\otimes j}$, and a change of variables gives
\begin{equation}\label{eq:moment-transform}
M_j(AT)=A^{\otimes j}M_j(T).
\end{equation}
In polar coordinates, both $\vol(T)$ and each fixed $M_j(T)$ depend locally Lipschitzly on $\rho_T$ in the uniform norm on any family with common inner and outer Euclidean radii.

The next lemma is the finite-dimensional algebraic device that turns exact symmetries of a body into finitely many smooth coordinates.  It is the unitary version of the moment construction used in \cite[Section~3.1]{LuYang2026}; we include the argument because preserving complex linearity is essential below.

\begin{lemma}[finite moment detection]\label{lem:moment-detect}
Let $S\subset E$ satisfy $C(S)=I_E$, and let $G=\Aut_{\C}(S)\subset U(E)$.  There are integers $j_1,\dots,j_s\geq1$ such that the stabilizer in $U(E)$ of
\[
\mathbf M(S)=\bigl(M_{j_1}(S),\dots,M_{j_s}(S)\bigr)
\]
is exactly $G$.
\end{lemma}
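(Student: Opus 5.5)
Two facts drive the argument.  The full moment sequence $(M_j(S))_{j\geq1}$ determines $S$.  Compactness of $U(E)$ and the Noetherian property of closed subgroups of a compact Lie group then let us cut down to finitely many moments.

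\emph{Step 1: every element of $U(E)$ fixing all moments lies in $G$.}  Since $g\in U(E)$ has $|\det_{\R}g|=1$, we have $\vol(gS)=\vol(S)$, and by \eqref{eq:moment-transform}
\[
\int_{gS}x^{\otimes j}\,dx=\int_S x^{\otimes j}\,dx\qquad\text{for all } j\geq1.
\]
Pairing with symmetric tensors shows that the finite measures $\mathbf 1_{gS}\,dx$ and $\mathbf 1_S\,dx$ give the same integral to every polynomial on $E_{\R}$.  Both measures are supported in a common compact ball, since $|gx|=|x|$.  By Stone--Weierstrass, polynomials are dense in $C$ of that ball, so the two measures coincide.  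Hence $gS$ and $S$ agree up to a null set.  Both are convex bodies, equal to the closures of their interiors, so $gS=S$.  Because $g$ is complex linear, $g\in G$.  Conversely, every $g\in G$ fixes all $M_j(S)$ by \eqref{eq:moment-transform}.  Thus
\[
G=\bigcap_{j\geq1}H_j,\qquad H_j:=\operatorname{Stab}_{U(E)}(M_j(S)).
\]

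\emph{Step 2: reduce to finitely many moments.}  Set $G_k=H_1\cap\cdots\cap H_k$.  Each $G_k$ is a closed subgroup of $U(E)$, since it is a stabilizer for a continuous linear action.  So the $G_k$ form a decreasing chain of compact Lie groups.  At each strict inclusion $G_{k+1}\subsetneq G_k$, either the dimension drops or the dimension is unchanged and the number of components drops: a closed subgroup of the same dimension contains $G_k^\circ$ and is a union of its components.  Both quantities are finite, so the chain stabilizes at some $k_0$.  Then $G_{k_0}=\bigcap_j H_j=G$, and we may take $(j_1,\dots,j_s)=(1,\dots,k_0)$.

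The main point needing care is Step 1, namely the passage from equality of all moments to equality of the bodies.  This is the only analytic input, and it is handled by the compact-support Stone--Weierstrass argument above.  The standing hypothesis $C(S)=I_E$ is used only through Lemma~\ref{lem:compactgroup}, which gives $G\subset U(E)$ so that the two stabilizers live in the same ambient group.  The remaining step, stabilization of descending chains of closed subgroups of a compact Lie group, is standard.
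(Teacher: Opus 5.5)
Your proof is correct. Step~1 is essentially the paper's argument: Stone--Weierstrass applied to the two Lebesgue measures, followed by a comparison of supports.

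Step~2 takes a genuinely different route.

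\emph{The paper's route.} It argues algebraically. The coordinates of $Q^{\otimes j}M_j(S)-M_j(S)$ are real polynomials in the entries of $Q$. By the Hilbert basis theorem the ideal they generate is finitely generated, and finitely many of the original equations already generate it. Those finitely many equations therefore cut out the same zero set in $U(E)$.

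\emph{Your route.} You use the descending chain condition for closed subgroups of the compact Lie group $U(E)$, measured by dimension and number of components. Since the component count can rise when the dimension drops, termination is really a lexicographic well-ordering argument. Your ``either the dimension drops or it is unchanged and the number of components drops'' does capture this correctly.

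\emph{What each buys.} The paper's argument never uses compactness or Lie theory, and it works equally well for stabilizers in $\GL_{\R}(E_{\R})$. It does, however, depend on the action being polynomial in $Q$. Your argument needs no algebraic structure of the action: any intersection of closed subgroups of a compact Lie group equals a finite subintersection. But it relies essentially on compactness. The strictly decreasing chain $2^k\Z$ of closed subgroups of $\R$ shows that the descending chain condition fails in noncompact groups. Your version also yields the consecutive indices $1,\dots,k_0$ directly.
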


\begin{proof}
Suppose first that $Q\in U(E)$ fixes $M_j(S)$ for every $j$.  Then the normalized Lebesgue measures of $S$ and $QS$ have the same integral against every real polynomial on $E_{\R}$.  Polynomials are uniformly dense in the continuous functions on the compact set $S\cup QS$ by Stone--Weierstrass \cite[Theorem~7.32]{Rudin1976}.  Hence the two normalized measures agree.  Their supports are $S$ and $QS$, so $QS=S$ and $Q\in G$.

Choose real coordinates on the matrix space containing $U(E)$.  Every coordinate of
\[
Q^{\otimes j}M_j(S)-M_j(S)
\]
is a polynomial in the real and imaginary parts of the entries of $Q$.  Let $I$ be the ideal generated by all these coordinate polynomials as $j$ varies.  By the Hilbert basis theorem, the ambient real polynomial ring is Noetherian \cite[Corollary~7.6]{AtiyahMacdonald1969}.  Choose generators $f_1,\dots,f_N$ of $I$.  Each $f_\nu$ is a finite polynomial combination of the original moment equations; collecting the finitely many original equations that occur in those representations gives an ideal $J$ with $(f_1,\dots,f_N)\subseteq J\subseteq I$.  Hence $J=I$.  Grouping this finite collection by moment degree gives $j_1,\dots,j_s$.  Their common stabilizer inside $U(E)$ is therefore the common stabilizer of all moments, which is $G$ by the first paragraph.
\end{proof}

Let
\[
\mathcal W=\bigoplus_{i=1}^s\Sym^{j_i}(E_{\R}).
\]
The compact Lie group $U(E)$ acts orthogonally on $\mathcal W$.

\begin{lemma}[orbit coordinates]\label{lem:orbit}
The map
\[
\Theta:U(E)/G\longrightarrow U(E)\mathbf M(S),
\qquad QG\longmapsto Q\mathbf M(S),
\]
is a diffeomorphism onto a compact embedded submanifold of $\mathcal W$.  Its inverse is locally Lipschitz with respect to the ambient Euclidean distance on $\mathcal W$.
\end{lemma}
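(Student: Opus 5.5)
We argue via standard compact Lie group theory. The group $U(E)$ is a compact Lie group acting smoothly (indeed polynomially, through the representations $Q\mapsto Q^{\otimes j_i}$) and orthogonally on $\mathcal W$. By Lemma~\ref{lem:moment-detect}, the stabilizer of $\mathbf M(S)$ in $U(E)$ is exactly $G$. In particular $G$ is closed, and the orbit map $Q\mapsto Q\mathbf M(S)$ descends to a well-defined injective map $\Theta$ on $U(E)/G$.

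The first step is to show that $\Theta$ is an injective immersion. Equivariance reduces this to checking injectivity of $d\Theta$ at the base coset $eG$. The differential there is $X\mapsto X\cdot\mathbf M(S)$ on $\mathfrak u(E)/\mathfrak g$, where $X\cdot$ denotes the induced derivation action on each $\Sym^{j_i}(E_{\R})$. Its kernel is the Lie algebra of the stabilizer, which is $\mathfrak g$. Indeed, if $X\cdot\mathbf M(S)=0$, then $\exp(tX)$ fixes $\mathbf M(S)$ for all $t$, because the action is linear and $\frac{d}{dt}\exp(tX)\mathbf M(S)=\exp(tX)\,X\cdot\mathbf M(S)=0$. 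Hence $\exp(tX)\in G$ and $X\in\mathfrak g$.

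The second step is to upgrade from immersion to embedding. The domain $U(E)/G$ is compact, so an injective immersion from it is a homeomorphism onto its image, which is therefore a compact embedded submanifold. This is also a standard fact for orbits of compact group actions.

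The third step, which is the only part needing care, is the local Lipschitz property of $\Theta^{-1}$ with respect to the ambient distance on $\mathcal W$, rather than the intrinsic distance on the orbit.

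1. Choose a $U(E)$-invariant Riemannian metric on $U(E)/G$. Since the orbit $\mathcal O=U(E)\mathbf M(S)$ is a compact embedded $C^\infty$ submanifold, the intrinsic distance $d_{\mathcal O}$ and the chordal distance $|w-w'|$ are comparable on $\mathcal O$, that is, $d_{\mathcal O}(w,w')\le C|w-w'|$ for all $w,w'\in\mathcal O$.
2. To prove this comparison, work locally and globally separately. Locally, $\mathcal O$ is a graph over its tangent space near each point, so the chordal and intrinsic distances agree to first order. Globally, if $|w-w'|$ is bounded below, then $d_{\mathcal O}$ is bounded above by the diameter of $\mathcal O$. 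Compactness then patches the two regimes together.
3. Since $\Theta$ is a diffeomorphism, $\Theta^{-1}:(\mathcal O,d_{\mathcal O})\to U(E)/G$ is Lipschitz. Composing with the comparison in item 1 gives the Lipschitz bound in the ambient norm.

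Equivalently, one could use the slice theorem, or the tubular neighbourhood and its smooth nearest-point retraction $r$ onto $\mathcal O$. Then $\Theta^{-1}\circ r$ is a smooth, hence locally Lipschitz, extension of $\Theta^{-1}$ to a neighbourhood of $\mathcal O$ in $\mathcal W$. This extension is what later arguments will actually use, to feed nearly exact moment data into exact orbit coordinates.
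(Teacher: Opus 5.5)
Your proposal is correct and follows the paper's proof essentially step for step: injectivity from Lemma~\ref{lem:moment-detect}, injectivity of the differential via $\rho(e^{tX})\mathbf M(S)=e^{t\,d\rho(X)}\mathbf M(S)$, compactness to upgrade the injective immersion to an embedding, and a comparison of intrinsic and ambient distances for the Lipschitz bound. You spell out that comparison more fully than the paper's one-line appeal to embeddedness; your intrinsic diameter is finite because $U(E)$, and hence the orbit, is connected. Your closing tubular-extension remark is harmless but not needed later, since in Proposition~\ref{prop:lipschitz-bundle} the moments $\mathbf M(\widetilde T_u)$ lie exactly on the orbit.
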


\begin{proof}
The group $G$ is a closed Lie subgroup of $U(E)$, so $U(E)/G$ is a smooth compact manifold.  The map $\Theta$ is smooth and injective by Lemma~\ref{lem:moment-detect}.  It remains to check that its differential is injective.

At the identity coset, let $X\in\mathfrak u(E)$ represent a tangent vector in the kernel.  If $\rho$ denotes the finite-dimensional tensor representation on $\mathcal W$, then
\[
d\rho(X)\mathbf M(S)=0.
\]
Consequently
\[
\rho(e^{tX})\mathbf M(S)=e^{t\,d\rho(X)}\mathbf M(S)=\mathbf M(S)
\]
for every $t$, and Lemma~\ref{lem:moment-detect} implies $e^{tX}\in G$.  Thus $X$ lies in the Lie algebra of $G$, so the tangent vector in $U(E)/G$ is zero.  Equivariance gives injectivity everywhere.  Since the domain is compact, the injective immersion is an embedding.  The inverse is smooth on the orbit and therefore locally Lipschitz in smooth coordinates; embeddedness compares those coordinates with the ambient Euclidean norm.
\end{proof}

\subsection{Bundles and the finite homotopy obstruction}\label{subsec:bundles}

We use right principal bundles.  A \emph{finite Lipschitz atlas} on a principal $L$-bundle $P\to Z$ consists of finitely many local sections $s_i:U_i\to P$ such that, with the convention
\[
s_j(z)=s_i(z)g_{ij}(z)\qquad (z\in U_i\cap U_j),
\]
each transition map $g_{ij}:U_i\cap U_j\to L$ is Lipschitz.  A global section $s$ is called \emph{Lipschitz} relative to such an atlas if the local coordinate maps $h_i$ defined by $s=s_i h_i$ are Lipschitz.  On the compact manifolds used below, different finite smooth background atlases give equivalent notions, so no metric choice will enter the argument.

Standard classification gives, for a path-connected Lie group $H$ and $q\geq2$,
\begin{equation}\label{eq:bundle-classification}
\{\text{principal $H$-bundles over }S^q\}/\cong
\;\simeq\;
\pi_q(BH)\cong\pi_{q-1}(H);
\end{equation}
see \cite[Chapter~4, Sections~8 and 12--13; Chapter~1, Theorem~5.3 and Chapter~4, Section~11]{Husemoller1994}.  Here there is no based-versus-free ambiguity: $H$ is path connected, hence $\pi_1(BH)\cong\pi_0(H)=0$, so $BH$ is simply connected.  If $\phi:S^q\to S^q$ has degree $d$, precomposition by $\phi$ acts as multiplication by $d$ on $\pi_q(BH)$; this is the usual action of $[S^q,S^q]\cong\Z$ on $\pi_q(BH)$, see \cite[Section~4.1 and Corollary~4.25]{Hatcher2002}.

We also need the following finiteness theorem.

\begin{lemma}[even homotopy of compact Lie groups]\label{lem:finite-homotopy}
If $H$ is a compact connected Lie group and $q>0$ is even, then $\pi_q(H)$ is finite.
\end{lemma}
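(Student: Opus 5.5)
The plan is to reduce the statement to the rational homotopy of $H$. Since $\pi_q(H)$ is finitely generated for a compact Lie group (a finite CW complex that is simply connected up to a finite cover), finiteness is equivalent to $\pi_q(H)\otimes\mathbb{Q}=0$. Throughout, the standing assumptions are that $H$ is compact, connected and $q>0$ is even.

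\emph{Finite generation.} A compact Lie group is a closed manifold and hence a finite CW complex. Its universal cover $\tilde H$ is again a Lie group, and $\pi_q(H)\cong\pi_q(\tilde H)$ for $q\geq2$. For compact connected $H$, the universal cover has the form $\tilde H\cong\R^k\times\tilde H_{ss}$ with $\tilde H_{ss}$ compact and simply connected; this is the standard structure theorem, in which $\pi_1(H)$ is finitely generated and a finite cover of $H$ is $T^k\times H_{ss}$ with $H_{ss}$ compact semisimple. Hence $\pi_q(H)\cong\pi_q(\tilde H_{ss})$ for $q\geq2$. By Serre's theorem, a simply connected finite complex has finitely generated homotopy groups, so $\pi_q(\tilde H_{ss})$ is finitely generated. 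The case $q=2$ can be covered directly as well, since $\pi_2$ of any Lie group vanishes; this is not needed, however.

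\emph{Rational vanishing.} By Hopf's theorem, $H^*(H;\mathbb{Q})$ is an exterior algebra on generators of odd degree. Because $H$ is an H-space, rational homotopy theory shows that $H$ is rationally equivalent to a product of odd-dimensional Eilenberg--MacLane spaces $K(\mathbb{Q},2r_i-1)$. Equivalently, by Cartan--Serre, the Hurewicz map identifies $\pi_*(H)\otimes\mathbb{Q}$ with the primitive elements of $H_*(H;\mathbb{Q})$, and these all lie in odd degrees. Therefore $\pi_q(H)\otimes\mathbb{Q}=0$ for even $q>0$. Combined with finite generation, $\pi_q(H)$ is a finitely generated torsion abelian group, hence finite.

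\emph{Main obstacle.} The substantive input is the identification of rational homotopy with odd-degree primitives. I would cite this as a standard fact (Cartan--Serre, or Milnor--Moore for connected H-spaces) rather than reprove it. If a more hands-on route is preferred, one can proceed by induction using the fibrations $SU(m-1)\to SU(m)\to S^{2m-1}$ together with Serre's result that $\pi_q(S^{2m-1})$ is finite for $q\neq 2m-1$. This handles the unitary, special unitary and symplectic families, but the general case, including exceptional groups, still requires the H-space argument. So I would rely on the citation.
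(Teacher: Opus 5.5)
Your argument is correct and is essentially the paper's. The paper also passes to a cover, namely the finite cover $T\times H_{\mathrm{ss}}\to H$ (your universal cover $\R^k\times\tilde H_{\mathrm{ss}}$ amounts to the same thing), and then cites Serre's finiteness theorem in even degrees, whose content is exactly the finite generation plus rational vanishing (Hopf, Cartan--Serre) that you spell out. The parenthetical ``simply connected up to a finite cover'' is false for tori, but your subsequent universal-cover reduction makes it harmless.
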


\begin{proof}
There is a finite covering homomorphism $T\times H_{\mathrm{ss}}\to H$, with $T$ a torus and $H_{\mathrm{ss}}$ compact, connected, simply connected and semisimple; see \cite[Section~2.9]{BorelHirzebruch1958}.  Covering maps induce isomorphisms on homotopy groups in degrees at least two, while a torus has no higher homotopy.  The assertion therefore reduces to $H_{\mathrm{ss}}$, for which Serre's finiteness theorem gives finiteness in positive even degrees \cite[Chapitre~V, \S3, Corollaire~2]{Serre1953}.
\end{proof}

\subsection{Degree theory in the Lipschitz category}\label{subsec:degree-prelim}

Whenever a complex vector space is regarded as a real vector space, we use its canonical complex orientation; Euclidean spheres are oriented as boundaries of their unit balls.  We write $B^m$ for the closed Euclidean unit ball in an oriented real $m$-space and $S^{m-1}=\partial B^m$.  For a continuous map $h:S^{m-1}\to S^{m-1}$, $\deg h$ denotes its topological degree.  For a bounded open $\Omega\subset\R^m$, a continuous map $F:\overline\Omega\to\R^m$, and $v\notin F(\partial\Omega)$, we write $\deg(F,\Omega,v)$ for Brouwer degree.

The cone on $h$ is
\[
C_h(0)=0,
\qquad
C_h(t\theta)=t h(\theta),\quad 0<t\leq1.
\]
The boundary characterization of Brouwer degree gives
\begin{equation}\label{eq:cone-degree-standard}
\deg(C_h,\Int B^m,v)=\deg h
\qquad (v\in\Int B^m).
\end{equation}
For example, $(h-v)/|h-v|$ is homotopic to $h$ through $(h-tv)/|h-tv|$.

For the analytic step, we use the signed degree formula in the following form.  If $F:D\to\R^m$ is Lipschitz on an open neighborhood $D$ of $\overline\Omega$, then $F$ is differentiable almost everywhere by Rademacher's theorem \cite[Theorem~3.2]{EvansGariepy2015}, has the Lusin $N$ property, and, provided $F(\partial\Omega)$ is null,
\begin{equation}\label{eq:signed-degree}
\int_{\Omega}g(F(x))\det DF(x)\,dx
=
\int_{\R^m}g(v)\deg(F,\Omega,v)\,dv
\end{equation}
for every bounded compactly supported Borel function $g$; see \cite[Remark~5.26(ii) and Theorem~5.27]{FonsecaGangbo1995}.  When a Lipschitz map is given only on $\overline\Omega$, we shall extend it coordinatewise by McShane's theorem \cite{McShane1934} before applying \eqref{eq:signed-degree}.

\section{The exact complex isometry bundle}\label{sec:bundle}

We now assume the hypotheses of Theorem~\ref{thm:hyperplane}.  Fix an auxiliary unit vector $u_0\in V$, put
\[
E=u_0^{\perp},
\qquad
S_0=K\cap E,
\]
and replace the model section by its covariance normalization
\begin{equation}\label{eq:model-normalization}
S=C(S_0)^{-1/2}S_0.
\end{equation}
Lemma~\ref{lem:covariance} shows that the normalizing map is complex linear and $C(S)=I_E$.  Set $G=\Aut_{\C}(S)\subset U(E)$.

For $u\in S(V)=S^{2n+1}$ define
\begin{equation}\label{eq:fiber}
\mathcal P_u=
\left\{
A\in\Hom_{\C}(E,V):
A(E)=u^{\perp},\quad A(S)=K\cap u^{\perp}
\right\}.
\end{equation}
The hypothesis makes every $\mathcal P_u$ nonempty.  Right composition by $G$ is free and transitive on each fiber.

\subsection{Local exact sections}\label{subsec:local}

\begin{proposition}[Lipschitz isometry bundle]\label{prop:lipschitz-bundle}
The disjoint union
\[
\mathcal P=\bigsqcup_{u\in S(V)}\mathcal P_u
\]
admits the structure of a principal $G$-bundle over $S(V)$ with a finite Lipschitz atlas.
\end{proposition}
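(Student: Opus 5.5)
The plan is to build local sections near each $u_1\in S(V)$ by transporting the model hyperplane to nearby hyperplanes with a fixed smooth frame, and then correcting the transported section by an exact unitary element chosen through the orbit coordinates of Lemma~\ref{lem:orbit}. Fix $u_1$ and a neighborhood $U$ of $u_1$ on which a smooth family of complex-linear isometries $R_u:E\to u^{\perp}$ exists. For instance, take $R_u$ to be the unitary part of the orthogonal projection onto $u^{\perp}$ composed with a fixed isometry $R_{u_1}:E\to u_1^{\perp}$; this is smooth and complex linear for $u$ near $u_1$. Then set
\[
T_u=R_u^{-1}(K\cap u^{\perp})\subset E,
\]
a balanced convex body in $E$. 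By the hypothesis of Theorem~\ref{thm:hyperplane}, $T_u=B_uS$ for some $B_u\in\GL_{\C}(E)$.

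\textbf{Normalization.} Put $N_u=C(T_u)^{-1/2}$, which is complex linear by Lemma~\ref{lem:covariance}, and $\widehat T_u=N_uT_u$, so that $C(\widehat T_u)=I_E$. Applying Lemma~\ref{lem:covariance} to $\widehat T_u=N_uB_uS$ shows $N_uB_u(N_uB_u)^*=I$. Hence $\widehat T_u=Q_uS$ with $Q_u\in U(E)$, determined modulo right multiplication by $G$. Thus $\mathbf M(\widehat T_u)=Q_u\mathbf M(S)$ lies in the orbit $U(E)\mathbf M(S)$, and Lemma~\ref{lem:orbit} identifies the coset $Q_uG=\Theta^{-1}(\mathbf M(\widehat T_u))$.

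\textbf{Regularity.} Next I check that $u\mapsto\mathbf M(\widehat T_u)$ is Lipschitz. The radial function of $K\cap u^{\perp}$ is the restriction of $\rho_K$, and $R_u$ is smooth, so $\rho_{T_u}$ depends Lipschitzly on $u$ in the uniform norm by \eqref{eq:gauge-lip}, with uniform inner and outer radii. Consequently $\vol$, $C$ and the moments $M_j$ of $T_u$ are Lipschitz in $u$. Since $C(T_u)$ stays in a compact set of positive operators, $N_u$ is Lipschitz, and \eqref{eq:moment-transform} gives $\mathbf M(\widehat T_u)=N_u^{\otimes j}M_j(T_u)$, which is Lipschitz. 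Composing with the locally Lipschitz $\Theta^{-1}$ gives a Lipschitz map $u\mapsto Q_uG\in U(E)/G$. Shrinking $U$, I lift it through a smooth local section of $U(E)\to U(E)/G$ to a Lipschitz map $u\mapsto Q_u$. The local section is then
\[
s(u)=R_uN_u^{-1}Q_u\in\mathcal P_u,
\]
which lies in $\mathcal P_u$ because it is complex linear, maps $E$ onto $u^{\perp}$, and sends $S$ to $R_uT_u=K\cap u^{\perp}$.

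\textbf{Atlas and transitions.} By compactness, finitely many such $(U_i,s_i)$ cover $S(V)$. On an overlap the transition is $g_{ij}(u)=s_i(u)^{-1}s_j(u)\in G$, obtained by restricting to $u^{\perp}$ and inverting. Each $s_i$ is Lipschitz as a map into $\Hom_{\C}(E,V)$ with uniformly bounded inverses on $u^{\perp}$, so each $g_{ij}$ is Lipschitz. The bundle structure is defined by declaring $U_i\times G\to\mathcal P|_{U_i}$, $(u,g)\mapsto s_i(u)g$, to be charts; these are bijections because $G$ acts freely and transitively on each fiber, and they are compatible because the $g_{ij}$ are continuous and satisfy the cocycle relation.

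\textbf{Main obstacle.} The hard step is the regularity argument: showing that $u\mapsto\mathbf M(\widehat T_u)$ is Lipschitz and genuinely lands in the exact orbit. This requires uniform radius bounds for $T_u$ and the Lipschitz dependence of the moments on radial functions. Landing in the orbit is what lets the locally Lipschitz inverse of Lemma~\ref{lem:orbit} be applied without any approximation.
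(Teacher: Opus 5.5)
Your proposal is correct and follows the paper's proof essentially step for step. Your $R_u$ (the unitary part of $P_uR_{u_1}$) is exactly the paper's $J_u=P_uR_*D_u^{-1/2}$, and your section $R_uN_u^{-1}Q_u$ coincides with the paper's $A_u=J_uC(T_u)^{1/2}Q_u$, with $Q_uG$ recovered through $\Theta^{-1}$ and lifted by a smooth local section as in the paper. The one point to tighten is the Lipschitz bound on $g_{ij}(u)=s_i(u)^{-1}s_j(u)$, where the inverse has a $u$-dependent domain $u^{\perp}$. The paper handles this by writing $g_{ij}(u)=\bigl(s_i(u)^*s_i(u)\bigr)^{-1}s_i(u)^*s_j(u)$, an expression defined on all of $V$ that is visibly Lipschitz on relatively compact charts.
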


\begin{proof}
Fix $u_*\in S(V)$ and a complex unitary map $R_*:E\to u_*^{\perp}$.  For $u$ near $u_*$, let
\[
P_uv=v-\langle v,u\rangle u
\]
be Hermitian orthogonal projection onto $u^{\perp}$ and define
\[
D_u=R_*^*P_uR_*:E\to E,
\qquad
J_u=P_uR_*D_u^{-1/2}:E\to u^{\perp}.
\]
Since $D_{u_*}=I_E$, after shrinking the neighborhood $D_u$ is positive definite and $J_u$ depends smoothly on $u$.  Directly,
\[
J_u^*J_u=D_u^{-1/2}R_*^*P_uR_*D_u^{-1/2}=I_E,
\]
so $J_u$ is unitary.

Pull the section back to $E$:
\[
T_u=J_u^{-1}(K\cap u^{\perp})
=\{x\in E:p_K(J_ux)\leq1\}.
\]
The body $T_u$ is balanced: $u^{\perp}$ is a complex subspace, $K\cap u^{\perp}$ is balanced, and $J_u$ is complex linear.  Lemma~\ref{lem:covariance} therefore shows that $C(T_u)$ commutes with $J$, so $C(T_u)^{\pm1/2}$ are complex linear.  This observation is what keeps the normalization below inside the complex-linear category.

Because $K$ contains and is contained in fixed auxiliary Euclidean balls, \eqref{eq:gauge-lip} and smoothness of $J_u$ show that $\rho_{T_u}$ depends locally Lipschitzly on $u$, uniformly on the unit sphere of $E$.  Polar-coordinate formulas then show that $\vol(T_u)$, $C(T_u)$ and each selected moment $M_{j_i}(T_u)$ are locally Lipschitz in $u$.  The spectra of $C(T_u)$ stay in a compact subset of the positive cone on a sufficiently small chart, so $C(T_u)^{\pm1/2}$ also vary locally Lipschitzly; compare \cite[Chapter~1]{Bhatia2007}.

Normalize
\[
\widetilde T_u=C(T_u)^{-1/2}T_u.
\]
Since $T_u$ is complex-linearly equivalent to $S$ and both $\widetilde T_u$ and $S$ have covariance $I_E$, there exists $Q_u\in U(E)$ with
\[
\widetilde T_u=Q_uS.
\]
Indeed, if $T_u=LS$, then $C(T_u)=LL^*$ and $C(T_u)^{-1/2}L$ is unitary.  Lemmas~\ref{lem:moment-detect} and \ref{lem:orbit} now recover the coset $Q_uG$ locally Lipschitzly from $\mathbf M(\widetilde T_u)$.  A smooth local section of the quotient map $U(E)\to U(E)/G$ gives, after shrinking the chart, a locally Lipschitz choice of $Q_u$ itself.

Define
\begin{equation}\label{eq:local-exact-map}
A_u=J_u C(T_u)^{1/2}Q_u.
\end{equation}
Then $A_u(E)=u^{\perp}$ and
\[
A_u(S)=J_u C(T_u)^{1/2}\widetilde T_u=J_uT_u=K\cap u^{\perp},
\]
so $A_u\in\mathcal P_u$.  Thus \eqref{eq:local-exact-map} is a local Lipschitz section.

If $A_i,A_j$ are two such sections, then on an overlap
\[
g_{ij}(u)=\bigl(A_i(u)^*A_i(u)\bigr)^{-1}A_i(u)^*A_j(u)
\]
is exactly $A_i(u)^{-1}A_j(u)$ as a map $E\to E$, hence lies in $G$.  On relatively compact charts the least singular value of $A_i$ is bounded away from zero, so $g_{ij}$ is Lipschitz.  Compactness of $S(V)$ yields a finite cover of such charts, which defines the claimed Lipschitz principal bundle.
\end{proof}

\subsection{Reduction to the identity component}\label{subsec:connected}

Let $G^{\circ}$ be the identity component of $G$.

\begin{lemma}[connected reduction]\label{lem:connected-reduction}
The bundle $\mathcal P$ contains a principal $G^{\circ}$-subbundle $\mathcal P^{\circ}\to S^{2n+1}$ with a finite Lipschitz atlas.
\end{lemma}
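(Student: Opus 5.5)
The plan is to reduce the structure group using the finite group $\pi_0(G)=G/G^{\circ}$ and the simple connectivity of the base. Since $G$ is a compact Lie group (Lemma~\ref{lem:compactgroup}), $G^{\circ}$ is open and closed in $G$ and $\Gamma:=G/G^{\circ}$ is finite. First form the associated bundle
\[
\mathcal P/G^{\circ}=\mathcal P\times_G\Gamma\longrightarrow S^{2n+1}.
\]
It is a covering space with finite fiber $\Gamma$, and its local trivializations come from the atlas of Proposition~\ref{prop:lipschitz-bundle}. Concretely, over $U_i$ the cosets $s_i(z)\gamma G^{\circ}$, $\gamma\in\Gamma$, give the sheets, and on overlaps the sheets are permuted by the locally constant class $[g_{ij}(z)]\in\Gamma$. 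Local constancy holds because $g_{ij}$ is continuous and $G^{\circ}$ is open in $G$.

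Next, since $n\geq 2$ (indeed $n\geq1$ suffices), the base $S^{2n+1}$ is simply connected and locally path connected. Every covering of it is therefore trivial, so $\mathcal P/G^{\circ}$ admits a global continuous section $\sigma$. Set
\[
\mathcal P^{\circ}=\{A\in\mathcal P:\ AG^{\circ}=\sigma(\pi(A))\}.
\]
On each fiber this is a single $G^{\circ}$-coset, so $G^{\circ}$ acts freely and transitively on each fiber of $\mathcal P^{\circ}$.

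For the Lipschitz atlas, refine the finite cover $\{U_i\}$ so that each $U_i$ is connected; on a sphere one can use geodesic balls, which remain finite by compactness. On a connected $U_i$, the class $\sigma(z)$ expressed in the chart $s_i$ is a continuous map into the discrete set $\Gamma$, hence constant, say $\gamma_i$. Pick a representative $\tilde\gamma_i\in G$ and define $s_i^{\circ}(z)=s_i(z)\tilde\gamma_i$, which is a local section of $\mathcal P^{\circ}$. Its transition maps are
\[
g^{\circ}_{ij}=\tilde\gamma_i^{-1}g_{ij}\tilde\gamma_j .
\]
These take values in $G^{\circ}$, because both $s_i^{\circ}$ and $s_j^{\circ}$ lie in the same $G^{\circ}$-coset $\sigma(z)$. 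They are Lipschitz, since conjugation and translation by fixed elements of the compact group are isometries for a bi-invariant metric, or smooth maps of the matrix group. Hence $\mathcal P^{\circ}$ is a principal $G^{\circ}$-subbundle with a finite Lipschitz atlas.

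The main obstacle I expect is purely a bookkeeping point: the intersections $U_i\cap U_j$ need not be connected. This is harmless, because the argument only uses connectedness of each $U_i$ to fix $\gamma_i$. The membership $g^{\circ}_{ij}(z)\in G^{\circ}$ is checked pointwise from $\sigma$, so no connectedness of the overlaps is needed.
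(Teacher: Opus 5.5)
Your proposal is correct and is essentially the paper's proof: both pass to the finite covering $\mathcal P/G^{\circ}\to S^{2n+1}$ and use simple connectivity of the sphere to select one sheet (your global section $\sigma$ plays the role of the paper's chosen component $\Sigma$). Both then right-translate the local sections on connected chart domains by a fixed coset representative, so that the transitions $\tilde\gamma_i^{-1}g_{ij}\tilde\gamma_j$ take values in $G^{\circ}$ and remain Lipschitz; your remark that only connectedness of each $U_i$, not of the overlaps, is needed is a correct detail the paper leaves implicit.
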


\begin{proof}
The quotient $G/G^{\circ}$ is finite, and therefore
\[
\mathcal P/G^{\circ}\longrightarrow S^{2n+1}
\]
is a finite covering.  Since $S^{2n+1}$ is simply connected for $n\geq1$, each connected component of this covering maps homeomorphically to the base.  Choose one component $\Sigma$ and take its inverse image in $\mathcal P$; this is the desired $G^{\circ}$-subbundle.

To retain the Lipschitz atlas, choose the chart domains in Proposition~\ref{prop:lipschitz-bundle} connected and write the original local sections as $s_i$, with
\[
s_j=s_i g_{ij}.
\]
On $U_i$, the chosen component $\Sigma$ corresponds to a fixed coset $g_iG^{\circ}\in G/G^{\circ}$.  Thus
\[
s_i^{\circ}=s_i g_i
\]
takes values in $\mathcal P^{\circ}$.  On an overlap,
\[
s_j^{\circ}=s_i^{\circ}g_{ij}^{\circ},
\qquad
g_{ij}^{\circ}=g_i^{-1}g_{ij}g_j.
\]
Because both $s_i^{\circ}$ and $s_j^{\circ}$ lie in the same $G^{\circ}$-subbundle, $g_{ij}^{\circ}$ takes values in $G^{\circ}$.  Constant left and right multiplication preserve Lipschitz regularity, so the maps $g_{ij}^{\circ}$ form a finite Lipschitz atlas for $\mathcal P^{\circ}$.
\end{proof}

\begin{proposition}[positive-degree trivialization]\label{prop:degree-pullback}
There exist an integer $d\geq1$ and a smooth map
\[
\phi:S^{2n+1}\longrightarrow S^{2n+1},
\qquad \deg\phi=d,
\]
such that $\phi^*\mathcal P^{\circ}$ is topologically trivial.
\end{proposition}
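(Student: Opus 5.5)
The plan is to classify $\mathcal P^{\circ}$ by homotopy theory and then kill its class by pullback.

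\textbf{Step 1: the class is torsion.} By Lemma~\ref{lem:connected-reduction}, $\mathcal P^{\circ}$ is a principal $G^{\circ}$-bundle over $S^{2n+1}$, and $G^{\circ}$ is a compact connected Lie group (a closed subgroup of $U(E)$ by Lemma~\ref{lem:compactgroup}). Forget the Lipschitz structure and regard $\mathcal P^{\circ}$ as a topological principal bundle. Apply \eqref{eq:bundle-classification} with $H=G^{\circ}$ and $q=2n+1\geq5$. This shows that its isomorphism class is an element
\[
c=[\mathcal P^{\circ}]\in\pi_{2n+1}(BG^{\circ})\cong\pi_{2n}(G^{\circ}).
\]
Since $2n>0$ is even, Lemma~\ref{lem:finite-homotopy} shows that $\pi_{2n}(G^{\circ})$ is a finite group. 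Let $d\geq1$ be its order, or the order of $c$. Then $dc=0$.

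\textbf{Step 2: choose $\phi$.} Take a smooth map $\phi:S^{2n+1}\to S^{2n+1}$ of degree $d$. One concrete choice comes from writing $S^{2n+1}\subset\C^{n+1}$ and using
\[
(z_0,\dots,z_n)\mapsto\frac{(z_0^d,z_1,\dots,z_n)}{|(z_0^d,z_1,\dots,z_n)|}.
\]
This map is smooth and well defined, because the denominator vanishes only when all $z_i=0$. Its degree is $d$: a regular value near $(1,0,\dots,0)$ has $d$ preimages, and at each of them the map is holomorphic in $z_0$, so all local orientations are positive. Any standard smooth degree-$d$ map would serve equally well.

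\textbf{Step 3: the pullback is trivial.} The classifying map of $\phi^*\mathcal P^{\circ}$ is $f\circ\phi$, where $f:S^{2n+1}\to BG^{\circ}$ classifies $\mathcal P^{\circ}$. As recalled after \eqref{eq:bundle-classification}, precomposing with a degree-$d$ self-map acts on $\pi_{2n+1}(BG^{\circ})$ as multiplication by $d$. Because $BG^{\circ}$ is simply connected, free and based homotopy classes agree, so no ambiguity arises here. Hence $[\phi^*\mathcal P^{\circ}]=dc=0$, and $\phi^*\mathcal P^{\circ}$ is topologically trivial.

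\textbf{Expected obstacle.} I expect the main point needing care to be the identification of the pullback class with $d\cdot c$. This rests on the fact that a degree-$d$ self-map of $S^{q}$ is homotopic to the $d$-fold pinch-and-sum map, so $[f\circ\phi]=d[f]$ in $\pi_q$. I would simply cite Hatcher as the paper already does, together with naturality of pullback under homotopy of classifying maps. The Lipschitz structure plays no role in this proposition; it is only needed later, when a continuous trivializing section is regularized.
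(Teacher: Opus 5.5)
Your proposal is correct and follows the paper's proof. You classify $\mathcal P^{\circ}$ by a class in $\pi_{2n+1}(BG^{\circ})\cong\pi_{2n}(G^{\circ})$, use Lemma~\ref{lem:finite-homotopy} to choose $d\geq1$ killing that class, and note that pullback by a degree-$d$ self-map multiplies the class by $d$. The only difference is cosmetic: you give an explicit smooth degree-$d$ map $z\mapsto(z_0^d,z_1,\dots,z_n)/|(z_0^d,z_1,\dots,z_n)|$, whose degree is indeed $+d$, whereas the paper smooths a continuous degree-$d$ representative and normalizes it radially.
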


\begin{proof}
By \eqref{eq:bundle-classification}, the bundle determines a class
\[
\alpha\in\pi_{2n+1}(BG^{\circ})\cong\pi_{2n}(G^{\circ}).
\]
The index $2n$ is positive and even, so Lemma~\ref{lem:finite-homotopy} implies that this group is finite.  Choose $d\geq1$ with $d\alpha=0$.  Represent the element $d\in\pi_{2n+1}(S^{2n+1})\cong\Z$ by a continuous self-map $\phi_0$ of degree $d$.  A sufficiently close smooth approximation $g:S^{2n+1}\to V_{\R}$ is nowhere zero; after radial normalization, $\phi=g/|g|$ is smooth and homotopic to $\phi_0$, hence still has degree $d$.  Pullback by $\phi$ multiplies the class by $d$, so
\[
[\phi^*\mathcal P^{\circ}]=d\alpha=0.
\]
The pullback bundle is therefore trivial.
\end{proof}

This is the point at which the complex field removes the parity obstruction in the real argument: the relevant homotopy group is $\pi_{2n}(G^{\circ})$, even for every $n$.

\subsection{Lipschitz regularization without losing exactness}\label{subsec:regularization}

The bundle in Proposition~\ref{prop:degree-pullback} has a continuous global section, but the degree calculation in Section~\ref{sec:degree} needs a Lipschitz one.  A generic approximation in the ambient space of linear maps would destroy exactness.  We therefore approximate in an associated vector bundle and retract equivariantly back to the orbit, following the regularization principle of \cite[Lemmas~3.7--3.9]{LuYang2026}.

\begin{lemma}[Lipschitz approximation in vector bundles]\label{lem:vector-approx}
Let $\mathcal E\to Z$ be a finite-dimensional real vector bundle over a compact smooth manifold.  Suppose $\mathcal E$ has a finite atlas whose transition maps are Lipschitz and fiberwise orthogonal.  Every continuous section of $\mathcal E$ can be approximated uniformly by Lipschitz sections.
\end{lemma}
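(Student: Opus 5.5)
The plan is the standard partition-of-unity mollification, carried out in local trivializations and glued with Lipschitz cutoffs. Fix the finite atlas $\{(U_i,\tau_i)\}_{i=1}^N$ of $\mathcal E$, with trivializations $\tau_i:\mathcal E|_{U_i}\to U_i\times\R^r$ whose transitions $\tau_i\tau_j^{-1}(z,\cdot)=g_{ij}(z)\in O(r)$ are Lipschitz. Shrink to open sets $W_i\Subset U_i$ that still cover $Z$. Choose a smooth partition of unity $\{\chi_i\}$ on $Z$ subordinate to $\{W_i\}$. Fix a Riemannian metric on $Z$; by compactness any two such metrics give equivalent Lipschitz notions. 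Measure the size of $\xi\in\mathcal E_z$ by $|\tau_i(\xi)|$ for any $i$ with $z\in U_i$. Orthogonality of the transitions makes this independent of $i$, so it is a well-defined continuous fiber norm. Uniform approximation will mean $\sup_z|s(z)-\sigma(z)|<\varepsilon$ in this norm.

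Given a continuous section $\sigma$, write $\sigma_i=\tau_i\circ\sigma|_{U_i}:U_i\to\R^r$, which is continuous. On the compact set $\overline W_i$, approximate $\sigma_i$ uniformly within $\varepsilon$ by a Lipschitz (even smooth) map $\tilde\sigma_i:U_i'\to\R^r$, where $U_i'$ is a neighborhood of $\overline W_i$ with $U_i'\Subset U_i$. One can do this by mollifying in a chart, or by Stone--Weierstrass polynomials in coordinates of an embedding $Z\subset\R^N$, restricted to $U_i'$. Then define
\[
s(z)=\sum_i \chi_i(z)\,\tau_i^{-1}\bigl(z,\tilde\sigma_i(z)\bigr),
\]
with each summand extended by zero outside $\supp\chi_i$. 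Fiberwise linearity makes this a section. For the estimate, note $\sigma(z)=\sum_i\chi_i(z)\sigma(z)$. Since each $\tau_i^{-1}(z,\cdot)$ is an isometry for the fiber norm, $|s(z)-\sigma(z)|\le\sum_i\chi_i(z)\,|\tilde\sigma_i(z)-\sigma_i(z)|<\varepsilon$.

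Lipschitz regularity relative to the atlas means each local representative $\tau_k\circ s$ is Lipschitz on a slightly shrunken chart. On $U_k\cap W_i$ we have
\[
\tau_k\circ s=\sum_i \chi_i\, g_{ki}\,\tilde\sigma_i .
\]
Each term is a product of Lipschitz functions that are bounded on compact subsets. Here $\chi_i$ is smooth, $g_{ki}$ is Lipschitz with values in the compact group $O(r)$, and $\tilde\sigma_i$ is Lipschitz on $\overline W_i$. The cutoff $\chi_i$ vanishes near $\partial W_i$, so each term extends by zero to a Lipschitz function on $U_k$. This is exactly where I use the margin $W_i\Subset U_i$: each term $\chi_i g_{ki}\tilde\sigma_i$ is then Lipschitz on $\supp\chi_i\cap\overline{U_k'}$, a compact set on which $g_{ki}$ is defined.

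The only delicate point is this bookkeeping of domains. The transitions $g_{ki}$ are Lipschitz only on $U_k\cap U_i$, so the products must be estimated on compact subsets. This requires choosing the partition of unity subordinate to the shrunken cover and verifying Lipschitz bounds chart by chart on compact pieces, where local Lipschitz implies global Lipschitz. Everything else is routine. Orthogonality of the transitions enters only to make the uniform estimate chart-independent, which the later equivariant retraction onto the exact orbit needs.
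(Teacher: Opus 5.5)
Your proposal is correct and follows essentially the same route as the paper. The paper's proof also approximates the chart representatives by smooth maps near the supports of a partition of unity with $\supp\psi_i\Subset U_i$ and glues them with the cutoffs. It uses orthogonality of the transitions to get the chart-independent bound $\sum_i\psi_i|g_i-f_i|$, and uses boundedness of the Lipschitz transitions to keep each glued term Lipschitz in every chart. Your nested shrinking $W_i\Subset U_i'\Subset U_i$ only spells out the margin the paper invokes when it says the support of $\psi_i g_i$ stays away from $\partial U_i$.
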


\begin{proof}
Let $\sigma_0$ be continuous and choose a finite trivializing cover $\{U_i\}$.  Take a smooth partition of unity $\{\psi_i\}$ with $\supp\psi_i\Subset U_i$.  In the $i$th chart, write $\sigma_0$ as a continuous map $f_i:U_i\to W$ into the Euclidean fiber.  By Whitney approximation for functions \cite[Theorem~6.21]{Lee2013}, choose smooth $g_i$ uniformly close to $f_i$ on $\supp\psi_i$.  The local section represented by $\psi_i g_i$ extends by zero to a global Lipschitz section because its support stays away from $\partial U_i$.  In another chart it is multiplied by a Lipschitz orthogonal transition matrix, so it remains Lipschitz.  Summing these global sections gives a Lipschitz section $\sigma$ and
\[
\|\sigma(z)-\sigma_0(z)\|
\leq\sum_i\psi_i(z)|g_i(z)-f_i(z)|,
\]
which can be made uniformly arbitrarily small.
\end{proof}

\begin{lemma}[equivariant Lipschitz tubular retraction]\label{lem:equiv-retract}
Let $L$ be a compact subgroup of the orthogonal group of a finite-dimensional Euclidean space.  Suppose $L$ is embedded in a finite-dimensional Euclidean matrix space $W$ on which left multiplication by $L$ is orthogonal.  Then there is an $L$-invariant open neighborhood $\mathcal U$ of $L$ and an $L$-equivariant Lipschitz retraction
\[
q:\mathcal U\to L,
\qquad
q(\ell T)=\ell q(T).
\]
The map $q$ may moreover be chosen smooth on $\mathcal U$.
\end{lemma}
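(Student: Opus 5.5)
We construct the retraction by the standard polar-decomposition device.  The natural candidate is the orthogonal factor of the polar decomposition, followed by a projection onto $L$ inside the orthogonal group.  Concretely, first work in the full orthogonal group $O$ of the Euclidean space on which $L$ acts.  Assume, as in the intended application (with $W$ the matrix space containing $U(E)$ or $O(E_{\R})$), that $W$ contains $O$ and that left multiplication by elements of $O$ is orthogonal on $W$.  On the open set $\mathcal U_1$ of invertible matrices near $O$, define
\[
\pi(T)=T(T^*T)^{-1/2}.
\]
This is smooth, since $T^*T$ is positive definite and the inverse square root is real-analytic on positive-definite matrices.  It restricts to the identity on $O$, and it satisfies $\pi(\ell T)=\ell\,\pi(T)$ for all $\ell\in O$, because $(\ell T)^*(\ell T)=T^*T$.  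This reduces the problem to finding an $L$-equivariant retraction from a neighborhood of $L$ in the compact manifold $O$ onto $L$.

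For that second step, use a tubular neighborhood built from a left-invariant metric.  Equip $O$ with a bi-invariant Riemannian metric; for example, use the metric induced from the Frobenius inner product, which is invariant under both left and right multiplication by $O$.  Then $L$ is a compact embedded submanifold, and left multiplication by $L$ acts by isometries of $O$ preserving $L$.  The normal exponential map $\nu L\to O$ is a diffeomorphism from an $\varepsilon$-disc bundle onto an open tube $N_\varepsilon$.  Because $L$ is compact, a uniform $\varepsilon$ exists.  The nearest-point projection $r:N_\varepsilon\to L$ is smooth.  It is also equivariant: isometries preserving $L$ commute with nearest-point projection, and uniqueness of nearest points on the tube gives $r(\ell g)=\ell r(g)$.

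Set $\mathcal U=\pi^{-1}(N_\varepsilon)\cap\mathcal U_1$ and $q=r\circ\pi$.  This set is open and $L$-invariant, $q$ is smooth and equivariant, and $q|_L=\mathrm{id}$.  If $L$ sits directly in a matrix space without reference to $O$, the same argument works with $\pi$ omitted.  In that case one takes the Euclidean nearest-point projection onto the compact submanifold $L\subset W$ on a uniform tube.  That projection is smooth and equivariant because left multiplication is an orthogonal map of $W$ preserving $L$.

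The remaining point is the Lipschitz property, since $\mathcal U$ need not be relatively compact.  To handle it, shrink $\mathcal U$ to $\{T:\operatorname{dist}(T,L)<\delta\}$, which is still $L$-invariant because the $L$-action is isometric.  For small $\delta$ the closure of this set is compact and contained in the domain of smoothness, so $q$ has bounded derivative there.  The main obstacle is that a bounded derivative alone does not give a global Lipschitz constant on a nonconvex set.  For two points at distance at least $\delta/2$, we bound $|q(T)-q(T')|\le\operatorname{diam}L\le(2\operatorname{diam}L/\delta)|T-T'|$.  For two closer points, both lie in the $2\delta$-tube, and the segment between them stays in the $2\delta$-tube.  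Taking the original domain to contain the $2\delta$-tube, the mean value inequality applies along that segment.  Combining the two cases gives a uniform Lipschitz constant.
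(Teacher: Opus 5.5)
Your proof is correct. Note that the polar-decomposition route in your first two paragraphs uses a hypothesis the lemma does not grant, as you yourself flag: that $W$ contains the full orthogonal group $O$ and that left multiplication by all of $O$, not just by $L$, is orthogonal on $W$.

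For the lemma as stated, the proof is therefore carried by your fallback: the Euclidean nearest-point projection onto the compact submanifold $L\subset W$ on a uniform tube, which is equivariant because each $\ell\in L$ is an isometry of $W$ preserving $L$. That is exactly the paper's construction. The paper phrases equivariance as "$L$ preserves the normal bundle," which is the same fact; your nearest-point formulation makes it immediate.

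The detour $T\mapsto T(T^*T)^{-1/2}$ followed by a Riemannian tube inside $O$ is valid in the paper's application, where $W=\operatorname{End}_{\R}(E_{\R})$ carries the Hilbert--Schmidt inner product. It gives nothing beyond the direct projection, though, and needs the extra hypothesis.

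For the Lipschitz bound, the paper covers $\overline{\mathcal U}$ by finitely many balls with a common local Lipschitz constant and takes a Lebesgue number $\delta$. It then handles pairs with $|T-T'|\geq\delta$ by the bound $\operatorname{diam}(L)/\delta$. Your version makes the same two-case split slightly more explicit. Because $\operatorname{dist}(\cdot,L)$ is $1$-Lipschitz, a segment of length $<\delta/2$ starting in the $\delta$-tube stays in the $2\delta$-tube, and the mean value inequality applies there.
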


\begin{proof}
By the tubular neighborhood theorem \cite[Chapter~10]{Lee2013}, the compact embedded submanifold $L\subset W$ has a smooth normal projection $q_0:\mathcal U_0\to L$ on a tubular neighborhood $\mathcal U_0$.  Since left multiplication by $L$ is orthogonal and preserves $L$, it preserves the normal bundle.  After shrinking the tube uniformly around the compact set $L$, we may therefore assume that $\mathcal U_0$ is $L$-invariant and that
\[
q_0(\ell T)=\ell q_0(T).
\]

Choose a smaller $L$-invariant tubular neighborhood $\mathcal U$ whose closure is compactly contained in $\mathcal U_0$.  Smoothness gives a uniform bound for $Dq_0$ on a neighborhood of $\overline{\mathcal U}$.  Cover $\overline{\mathcal U}$ by finitely many Euclidean balls on which the mean-value estimate gives one common local Lipschitz constant, and let $\delta>0$ be a Lebesgue number for this cover.  If $|T-T'|<\delta$, the two points lie in one such ball and the local estimate applies.  If $|T-T'|\geq\delta$, compactness of $L$ gives
\[
|q_0(T)-q_0(T')|
\leq \operatorname{diam}(L)
\leq \frac{\operatorname{diam}(L)}{\delta}|T-T'|.
\]
Thus $q=q_0|_{\mathcal U}$ is globally Lipschitz on $\mathcal U$, while retaining smoothness, equivariance, and the retraction property.
\end{proof}

\begin{proposition}[exact Lipschitz section]\label{prop:exact-lipschitz}
Let $P\to Z$ be a topologically trivial principal bundle over a compact smooth manifold, with compact matrix structure group $L$ and a finite Lipschitz atlas.  Assume that $L$ is embedded in a Euclidean matrix space $W$ so that left multiplication by $L$ is orthogonal.  Then $P$ has a Lipschitz global section.
\end{proposition}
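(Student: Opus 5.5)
The plan is to pass to the associated vector bundle $\mathcal E=P\times_L W$, where $L$ acts on $W$ by left multiplication, and to regard a section of $P$ as an $L$-equivariant object that can be approximated in $\mathcal E$ and then retracted back to $P$. Concretely, $P$ sits inside $\mathcal E$ as the subbundle $P\times_L L$. Using the finite Lipschitz atlas $\{s_i\}$ with transitions $g_{ij}:U_i\cap U_j\to L$, $\mathcal E$ acquires local trivializations $U_i\times W$. Its transition maps are $w\mapsto g_{ij}(z)^{-1}w$, which are Lipschitz in $z$ and fiberwise orthogonal because left multiplication by $L$ is orthogonal on $W$. Thus $\mathcal E$ satisfies the hypotheses of Lemma~\ref{lem:vector-approx}. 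In the $i$th chart a section of $P$ is written as $s=s_ih_i$ with $h_i:U_i\to L\subset W$, and $h_j=g_{ij}^{-1}h_i$ on overlaps. Hence sections of $P$ are exactly the sections of $\mathcal E$ whose local representatives take values in $L$.

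By topological triviality, $P$ admits a continuous global section $\sigma_0$, which we view as a continuous section of $\mathcal E$ with local representatives $h_i^0\in L$. Lemma~\ref{lem:equiv-retract} gives an $L$-invariant neighborhood $\mathcal U$ of $L$ in $W$ and an equivariant Lipschitz retraction $q:\mathcal U\to L$. Because $\mathcal U$ is a tube around the compact set $L$ and left multiplication is orthogonal, $\mathcal U$ contains the uniform neighborhood $\{w:\operatorname{dist}(w,L)<\varepsilon\}$ for some $\varepsilon>0$. This neighborhood is preserved by the orthogonal transitions, so the condition ``local representative lies within $\varepsilon$ of $L$'' is chart-independent. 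Lemma~\ref{lem:vector-approx} then supplies a Lipschitz section $\sigma$ of $\mathcal E$ whose local representatives $f_i$ satisfy $|f_i-h_i^0|<\varepsilon$ uniformly, so each $f_i$ takes values in $\mathcal U$.

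Next define $h_i=q\circ f_i:U_i\to L$. Equivariance gives, on overlaps,
\[
h_j=q(f_j)=q(g_{ij}^{-1}f_i)=g_{ij}^{-1}q(f_i)=g_{ij}^{-1}h_i ,
\]
so $s=s_ih_i$ is a well-defined global section of $P$. Each $h_i$ is a composition of the Lipschitz map $q$ with the Lipschitz map $f_i$, so $s$ is Lipschitz relative to the atlas.

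The main point needing care is that the notion of Lipschitz is local to each chart. Lemma~\ref{lem:vector-approx} yields Lipschitz representatives only on compact subsets of the chart domains, so I would first shrink the cover to $V_i\Subset U_i$ that still covers $Z$ and check Lipschitz bounds there. I would also confirm that the uniform closeness in Lemma~\ref{lem:vector-approx} is measured in the fiber norm, which all charts share up to orthogonal change. Finally, $Z$ compact ensures that finitely many charts suffice.
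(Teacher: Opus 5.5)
Your proposal is correct and follows the paper's proof. You form $\mathcal E=P\times_L W$, approximate the continuous section (viewed in $\mathcal E$) by a Lipschitz one via Lemma~\ref{lem:vector-approx}, and push it back into $P$ with the equivariant retraction of Lemma~\ref{lem:equiv-retract}. Your chartwise maps $h_i=q\circ f_i$ are the paper's fiberwise map $Q([p,T])=[p,q(T)]$ written in local coordinates, and your remark that $\{w:\operatorname{dist}(w,L)<\varepsilon\}$ is preserved by the orthogonal transitions spells out the paper's ``close enough'' step.
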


\begin{proof}
Let $W$ be a Euclidean matrix space containing $L$ and form the associated vector bundle
\[
\mathcal E=P\times_L W,
\qquad
(p\ell,T)\sim(p,\ell T).
\]
The map
\[
\iota:P\longrightarrow\mathcal E,
\qquad
p\longmapsto[p,I],
\]
identifies $P$ with the orbit subbundle $P\times_LL\subset\mathcal E$.  Because its transition functions act on $W$ by left multiplication and that representation of $L$ is orthogonal, the associated vector bundle satisfies the hypotheses of Lemma~\ref{lem:vector-approx}.

Topological triviality gives a continuous section $s_0$ of $P$.  Approximate $\iota\circ s_0$ uniformly by a Lipschitz section $\sigma$ of $\mathcal E$, close enough that every value lies in the fiberwise copy of the invariant neighborhood from Lemma~\ref{lem:equiv-retract}.  Equivariance makes
\[
Q([p,T])=[p,q(T)]
\]
well defined: from $(p\ell,T)\sim(p,\ell T)$ one gets
\[
[p\ell,q(T)]=[p,\ell q(T)]=[p,q(\ell T)].
\]
Because $q$ is Lipschitz and the transition functions act orthogonally, $Q$ is a Lipschitz bundle map in every chart.  Its image is the orbit subbundle $\iota(P)$, so $\iota^{-1}Q\sigma$ is a Lipschitz global section of $P$.
\end{proof}

We now apply this to the pullback of $\mathcal P^{\circ}$.

\begin{theorem}[global exact family]\label{thm:global-family}
There exist an integer $d\geq1$, a smooth map $\phi:S^{2n+1}\to S^{2n+1}$ of degree $d$, and a Lipschitz map
\[
z\longmapsto A_z\in\Hom_{\C}(E,V)
\]
such that
\begin{equation}\label{eq:global-exact}
A_z(E)=\phi(z)^{\perp},
\qquad
A_z(S)=K\cap\phi(z)^{\perp}
\end{equation}
for every $z\in S^{2n+1}$.
\end{theorem}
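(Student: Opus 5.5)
The plan is to assemble Theorem~\ref{thm:global-family} from Proposition~\ref{prop:degree-pullback} and Proposition~\ref{prop:exact-lipschitz}, applied to the pullback $\phi^*\mathcal P^{\circ}$. First, Lemma~\ref{lem:connected-reduction} gives the $G^{\circ}$-subbundle $\mathcal P^{\circ}\subset\mathcal P$ with a finite Lipschitz atlas of local sections $s_i^{\circ}:U_i\to\mathcal P^{\circ}$ and Lipschitz transitions $g^{\circ}_{ij}$. Proposition~\ref{prop:degree-pullback} then supplies $d\geq1$ and a smooth $\phi$ of degree $d$ such that $\phi^*\mathcal P^{\circ}$ is topologically trivial. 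This fixes $d$ and $\phi$ as in the statement.

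Next I will check that $\phi^*\mathcal P^{\circ}$ meets the hypotheses of Proposition~\ref{prop:exact-lipschitz}. Its atlas consists of the sets $\phi^{-1}(U_i)$ with sections $s_i^{\circ}\circ\phi$ and transitions $g^{\circ}_{ij}\circ\phi$. Because $\phi$ is smooth on a compact manifold, it is Lipschitz, so these transitions are Lipschitz, and the atlas is finite. For the structure group, take $L=G^{\circ}\subset U(E)$, which is compact by Lemma~\ref{lem:compactgroup}. View $U(E)$ inside the real orthogonal group of $E_{\R}$ and embed $L$ in $W=\operatorname{End}_{\R}(E_{\R})$ with the Frobenius inner product. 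Left multiplication by an orthogonal matrix is Frobenius-orthogonal, so the proposition applies and yields a global section $s$ of $\phi^*\mathcal P^{\circ}$ that is Lipschitz relative to the pulled-back atlas.

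Unwinding, $s(z)$ is an element $A_z\in\mathcal P^{\circ}_{\phi(z)}\subset\mathcal P_{\phi(z)}$, so \eqref{eq:fiber} gives exactly \eqref{eq:global-exact}. It remains to show that $z\mapsto A_z$ is Lipschitz as a map into $\Hom_{\C}(E,V)$, not only in bundle coordinates. On $\phi^{-1}(U_i)$ write $A_z=s_i^{\circ}(\phi(z))h_i(z)$ with $h_i$ Lipschitz into $G^{\circ}$. The local section $s_i^{\circ}=s_ig_i$ is built from \eqref{eq:local-exact-map} and is Lipschitz into $\Hom_{\C}(E,V)$, at least after shrinking to relatively compact charts. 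Composition with the smooth map $\phi$ preserves this. The product of bounded Lipschitz matrix-valued maps is Lipschitz, and $G^{\circ}$ is bounded. Hence $A_z$ is locally Lipschitz, and compactness of $S^{2n+1}$ together with a Lebesgue-number argument, as in Lemma~\ref{lem:equiv-retract}, upgrades this to a global Lipschitz bound. Complex linearity is automatic, since every element of $\mathcal P_u$ lies in $\Hom_{\C}(E,V)$.

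The main obstacle is bookkeeping rather than anything new: making sure the atlas regularity passes through the reduction, the pullback, and the associated-bundle regularization. The most delicate point is that the local sections from Proposition~\ref{prop:lipschitz-bundle} are only locally Lipschitz. I would handle this by shrinking to relatively compact chart domains before forming the finite atlas, so that all constants are uniform.
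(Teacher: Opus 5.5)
Your proposal is correct and follows the paper's proof essentially step for step. You take $d$ and $\phi$ from Proposition~\ref{prop:degree-pullback}, apply Proposition~\ref{prop:exact-lipschitz} to $\phi^*\mathcal P^{\circ}$ with $L=G^{\circ}$ and $W=\operatorname{End}_{\R}(E_{\R})$, write $A_z=s_i^{\circ}(\phi(z))h_i(z)$ on each pulled-back chart, and upgrade the local Lipschitz bounds to a global one by a Lebesgue-number argument. Your extra remark about shrinking to relatively compact charts, so that the local sections are uniformly Lipschitz, matches how the paper handles the same point in the proof of Proposition~\ref{prop:lipschitz-bundle}.
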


\begin{proof}
Choose $d$ and $\phi$ as in Proposition~\ref{prop:degree-pullback}.  We apply Proposition~\ref{prop:exact-lipschitz} with $L=G^{\circ}$ and $W=\operatorname{End}_{\R}(E_{\R})$ equipped with the Hilbert--Schmidt inner product.  Since $G^{\circ}\subset U(E)$, left multiplication by $G^{\circ}$ is orthogonal on $W$.  Let $s_i^{\circ}:U_i\to\mathcal P^{\circ}$ be the finite Lipschitz atlas from Lemma~\ref{lem:connected-reduction}.  On the pullback chart $\phi^{-1}(U_i)$ the natural local section is
\[
\widetilde s_i(z)=(z,s_i^{\circ}(\phi(z))).
\]
Since $\phi$ is smooth on the compact sphere, it is Lipschitz; hence the pulled-back transition functions and the ambient maps $s_i^{\circ}(\phi(z))\in\Hom_{\C}(E,V)$ are Lipschitz.

Proposition~\ref{prop:exact-lipschitz} gives a Lipschitz global section $s$ of $\phi^*\mathcal P^{\circ}$.  By definition of a Lipschitz section, on $\phi^{-1}(U_i)$ there is a Lipschitz map $h_i:\phi^{-1}(U_i)\to G^{\circ}$ such that
\[
s(z)=\widetilde s_i(z)h_i(z).
\]
Forget the pullback coordinate and write the corresponding exact map as
\[
A_z=s_i^{\circ}(\phi(z))\,h_i(z)\in\Hom_{\C}(E,V).
\]
Both factors are bounded and Lipschitz on each chart, so $z\mapsto A_z$ is locally Lipschitz in the ambient operator norm.  A finite cover of the compact sphere has a Lebesgue number; combining the local bounds for nearby points with the boundedness of $A_z$ for points farther apart gives one global Lipschitz constant.  Thus $z\mapsto A_z$ is an ambient Lipschitz map.  Since $s(z)$ lies in the exact fiber over $\phi(z)$, the identities \eqref{eq:global-exact} hold, and complex linearity is automatic.
\end{proof}

The theorem is the point at which the finite homotopy obstruction becomes useful rather than merely removable.  The family $A_z$ is constructed only after reparametrizing the normal sphere by $\phi$, but the nonzero degree of that reparametrization is retained.  Section~\ref{sec:degree} shows that this integer is exactly the multiplicity with which the image fills the dilated body in the signed degree identity.

\begin{remark}\label{rem:no-projective}
No equivariance of $\phi$ under the circle action on $S^{2n+1}$ is required.  Although $u$ and $e^{it}u$ determine the same hyperplane, the proof works on the unit-normal sphere itself.  The fiber over $z$ in the pullback is simply the exact-isometry fiber belonging to $\phi(z)^{\perp}$.
\end{remark}

\section{From degree to polynomiality}\label{sec:degree}

Let $p_S$ denote the Minkowski functional of the normalized model $S$.  Exactness gives, for every $z$ and $y\in E$,
\begin{equation}\label{eq:gauge-exact}
p_K(A_zy)=p_S(y).
\end{equation}
Put
\[
m=\dim_{\R}V_{\R}=2n+2.
\]
Thus the parameter sphere in Theorem~\ref{thm:global-family} is $S^{m-1}$.

\subsection{The boundary map and its orientation}\label{subsec:boundary}

Fix $0\neq y\in E$ and write
\[
r=p_S(y)>0.
\]
Define
\[
f_y:S^{m-1}\to r\partial K,
\qquad
f_y(z)=A_zy,
\]
and
\[
w_y:S^{m-1}\to S^{m-1},
\qquad
w_y(z)=\frac{A_zy}{|A_zy|}.
\]
The first map lands in $r\partial K$ by \eqref{eq:gauge-exact}.

\begin{lemma}[boundary degree]\label{lem:boundary-degree}
Give $r\partial K$ the boundary orientation induced from the oriented real vector space $V_{\R}$.  Then
\[
\deg w_y=d,
\qquad
\deg f_y=d.
\]
\end{lemma}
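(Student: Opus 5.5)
We prove $\deg w_y=d$ by homotoping $w_y$ to a map whose degree is visibly that of $\phi$. Both $w_y(z)$ and $\phi(z)$ are unit vectors, and by \eqref{eq:global-exact} $A_zy\in\phi(z)^{\perp}$, so $w_y(z)$ is Hermitian-orthogonal to $\phi(z)$. In particular $w_y(z)\neq\pm\phi(z)$, and in fact $w_y(z)\neq -\phi(z)$ is all we need for a straight-line homotopy. The normalized straight-line homotopy
\[
 H_t(z)=\frac{(1-t)w_y(z)+t\,\phi(z)}{|(1-t)w_y(z)+t\,\phi(z)|}
\]
is therefore well defined, since the numerator vanishes only if $w_y(z)=-\phi(z)$. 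This shows $w_y\simeq\phi$, so $\deg w_y=\deg\phi=d$.

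Note that $w_y$ need not be homotopic to $\phi$ through a rotation argument. The orthogonality to $\phi(z)$ alone suffices: antipodality is the only obstruction to the straight-line homotopy, and orthogonality excludes it. One should double-check the orientation convention. The spheres are oriented as boundaries of unit balls, and both $w_y$ and $\phi$ are self-maps of the same oriented $S^{m-1}$, so no sign enters.

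For $\deg f_y$, we factor $f_y$ through the radial homeomorphism
\[
\pi:r\partial K\to S^{m-1},\qquad \pi(x)=x/|x|.
\]
By \eqref{eq:gauge-exact}, $f_y$ lands in $r\partial K$, and $w_y=\pi\circ f_y$. Since $K$ is a convex body containing the origin in its interior, $\pi$ is a homeomorphism with inverse $\theta\mapsto r\rho_K(\theta)\theta$. We must check that $\pi$ has degree $+1$ when $r\partial K$ carries the boundary orientation from $V_{\R}$. For this, the radial map $x\mapsto x/(r\,p_K(x))\cdot$ (suitably rescaled) extends to a homeomorphism of closed regions, $rK\to B^m$, which is isotopic to the identity through $x\mapsto x/((1-s)+s\,r p_K(x)/|x|)$-type radial rescalings. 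These are orientation preserving on interiors, so $\pi$ preserves boundary orientations. Then $\deg f_y=\deg(\pi\circ f_y)=\deg w_y=d$.

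The main (mild) obstacle is this orientation bookkeeping for $\pi$, i.e. that the radial homeomorphism between star-shaped boundaries has degree $+1$ rather than $-1$. We handle it via the radial homotopy above, or equivalently by noting that $C_\pi$ (the cone) is a homeomorphism $rK\to B^m$ isotopic to a positive dilation, and applying \eqref{eq:cone-degree-standard}. Everything else follows from \eqref{eq:global-exact} and continuity of $z\mapsto A_z$ from Theorem~\ref{thm:global-family}, which ensures that $w_y$ and $f_y$ are continuous and that $|A_zy|>0$ because $A_z$ is injective.
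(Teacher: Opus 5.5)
Your proof is correct and is essentially the paper's: your normalized straight-line homotopy traces the same great-circle arc as the paper's $\cos(\pi t/2)\phi(z)+\sin(\pi t/2)w_y(z)$, and your $\pi$ is just $R_r^{-1}$, which gets degree $+1$ from the same isotopy through positive radial rescalings. One small slip: for the isotopy to end at the homeomorphism $rK\to B^m$, the divisor should be $(1-s)+s\,r|x|/p_K(x)=(1-s)+s\,r\rho_K(x/|x|)$, not $(1-s)+s\,r\,p_K(x)/|x|$.
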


\begin{proof}
By \eqref{eq:global-exact}, $A_zy\in\phi(z)^{\perp}$.  Hence $w_y(z)$ and $\phi(z)$ are Hermitian orthogonal unit vectors, and therefore orthogonal in $V_{\R}$.  The formula
\[
H(z,t)=
\cos\!\left(\frac{\pi t}{2}\right)\phi(z)
+
\sin\!\left(\frac{\pi t}{2}\right)w_y(z)
\]
defines a homotopy in $S^{m-1}$ from $\phi$ to $w_y$.  Thus $\deg w_y=\deg\phi=d$.

Consider the radial homeomorphism
\[
R_r:S^{m-1}\to r\partial K,
\qquad
R_r(\theta)=r\rho_K(\theta)\theta,
\]
and its extension
\[
\widehat R_r(0)=0,
\qquad
\widehat R_r(t\theta)=tr\rho_K(\theta)\theta.
\]
To determine its orientation, put $a(\theta)=r\rho_K(\theta)>0$ and define
\[
\Psi_s(t\theta)=t\bigl((1-s)+sa(\theta)\bigr)\theta,
\qquad 0\leq s\leq1.
\]
For each $s$, the radial factor is strictly positive, so $\Psi_s$ is a homeomorphism.  This is an isotopy from the identity to $\widehat R_r$.  Therefore $\widehat R_r$ is orientation preserving, and its boundary restriction $R_r$ has degree $+1$ with the boundary orientation on $r\partial K$.

Finally, \eqref{eq:gauge-exact} gives
\[
f_y=R_r\circ w_y.
\]
Multiplicativity of degree yields $\deg f_y=d$.
\end{proof}

The isotopy in this proof is important: it fixes the sign needed later in the signed degree formula, rather than defining an orientation after the fact.

\subsection{The Lipschitz cone}\label{subsec:cone}

Extend $z\mapsto A_z$ homogeneously to
\[
\mathcal A:B^m\to\Hom_{\C}(E,V),
\qquad
\mathcal A(0)=0,
\quad
\mathcal A(tz)=tA_z.
\]

\begin{lemma}[Lipschitz homogeneous extension]\label{lem:cone-lipschitz}
The map $\mathcal A$ is Lipschitz on $B^m$.
\end{lemma}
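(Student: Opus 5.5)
We prove the Lipschitz bound for $\mathcal A$ directly from two facts supplied by Theorem~\ref{thm:global-family}: the map $z\mapsto A_z$ is Lipschitz on $S^{m-1}$ with some constant $\Lambda$ (in operator norm), and it is bounded, say $\|A_z\|\le M$ for all $z$ (continuity on a compact set). We write points of $B^m$ as $x=t z$ and $x'=t' z'$ with $t=|x|$, $t'=|x'|$ and $z,z'\in S^{m-1}$.

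The origin needs no special argument. If $x'=0$, then
\[
\|\mathcal A(x)-\mathcal A(0)\|=t\|A_z\|\le M|x|,
\]
and the case $x=0$ is symmetric.

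For $x,x'\neq0$, we assume without loss of generality that $t\ge t'$. We split the difference into a radial part and an angular part:
\[
\mathcal A(x)-\mathcal A(x')=(t-t')A_z+t'(A_z-A_{z'}).
\]
The radial part satisfies $\|(t-t')A_z\|\le M\,\bigl||x|-|x'|\bigr|\le M|x-x'|$.

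The angular part satisfies $\|t'(A_z-A_{z'})\|\le \Lambda\, t'|z-z'|$. To compare this with $|x-x'|$, we use the elementary estimate
\[
t'|z-z'|\le |x-x'|\qquad (t'\le t).
\]
This holds because
\[
|x-x'|^2=(t-t')^2+tt'|z-z'|^2\ge t'^2|z-z'|^2.
\]
The identity follows by expanding $|tz-t'z'|^2=t^2+t'^2-2tt'\langle z,z'\rangle$ and using $|z-z'|^2=2-2\langle z,z'\rangle$.

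Combining the two parts gives the global bound
\[
\|\mathcal A(x)-\mathcal A(x')\|\le (M+\Lambda)|x-x'|
\]
on all of $B^m$.

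The only step that needs care is comparing the angular increment with the Euclidean distance when the two radii differ. This is why we attach the factor $t'$ to the smaller radius before applying the Lipschitz bound on the sphere. With the larger radius $t$ instead, the inequality $t|z-z'|\le|x-x'|$ can fail near the origin, so the choice of splitting matters. Everything else is immediate from boundedness and the Lipschitz property of $z\mapsto A_z$.
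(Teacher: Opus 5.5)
Your proof is correct and follows the paper's argument: you split the difference into a radial term and the smaller radius times the angular difference $A_z-A_{z'}$, exactly as the paper does. The only change is that your identity $|x-x'|^2=(t-t')^2+tt'|z-z'|^2$ gives the sharper estimate $t'|z-z'|\le|x-x'|$ in place of the paper's triangle-inequality bound $r|z-w|\le 2|x-x'|$, so you get the constant $M+\Lambda$ instead of $2M_1+M_0$.
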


\begin{proof}
Let
\[
M_0=\sup_{z\in S^{m-1}}\|A_z\|,
\qquad
M_1=\Lip(z\mapsto A_z).
\]
Write $x=rz$ and $x'=sw$ with $0\leq r\leq s\leq1$.  Then
\[
\|rA_z-sA_w\|
\leq rM_1|z-w|+M_0|r-s|.
\]
Moreover $|r-s|\leq|x-x'|$, and
\[
r|z-w|
\leq |rz-rw|+|rw-sw|
\leq |x-x'|+|r-s|
\leq2|x-x'|.
\]
Hence
\[
\|\mathcal A(x)-\mathcal A(x')\|
\leq(2M_1+M_0)|x-x'|.
\]
The same estimate covers the case where one point is the origin.
\end{proof}

For fixed $y\neq0$ define
\[
F_y:B^m\to V_{\R},
\qquad
F_y(x)=\mathcal A(x)y.
\]
This map is Lipschitz and satisfies
\[
F_y(B^m)\subset rK,
\qquad
F_y(S^{m-1})\subset r\partial K.
\]

\begin{lemma}[degree of the cone]\label{lem:cone-degree}
For $v\notin r\partial K$,
\[
\deg(F_y,\Int B^m,v)=
\begin{cases}
d,&v\in\Int(rK),\\
0,&v\notin rK.
\end{cases}
\]
\end{lemma}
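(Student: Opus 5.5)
The plan is to use homotopy invariance of Brouwer degree together with the boundary computation of Lemma~\ref{lem:boundary-degree}. First, $F_y(S^{m-1})\subset r\partial K$, so for $v\notin r\partial K$ the degree $\deg(F_y,\Int B^m,v)$ is well defined. It is locally constant in $v$ on the complement of $F_y(\partial B^m)$, and hence constant on each connected component of $V_\R\setminus r\partial K$.

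There are exactly two such components, $\Int(rK)$ and $V_\R\setminus rK$. The second is connected because $m\geq2$: it is radially homeomorphic to the complement of a closed ball. Since $F_y(B^m)\subset rK$ is bounded, any $v$ outside $rK$ is not attained by $F_y$ at all, so the degree there is $0$.

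For the interior component it suffices to compute at $v=0$, which lies in $\Int(rK)$ because $r>0$. I will compare $F_y$ with the cone $C_{f_y}$ on the boundary map, composed with the radial homeomorphism. Define
\[
G(x)=\widehat R_r\bigl(C_{w_y}(x)\bigr),
\qquad
C_{w_y}(t\theta)=t\,w_y(\theta).
\]
Then $G$ agrees with $F_y$ on $S^{m-1}$, since both equal $f_y=R_r\circ w_y$ there.

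The straight-line homotopy $(1-s)F_y+sG$ on $\overline{B^m}$ keeps boundary values fixed in $r\partial K$, so $v=0$ is never a boundary value. By homotopy invariance (equivalently, by the boundary-dependence of Brouwer degree), $\deg(F_y,\Int B^m,0)=\deg(G,\Int B^m,0)$.

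Now $G^{-1}(0)=\{0\}$, and $\widehat R_r$ is an orientation-preserving homeomorphism of $B^m$ onto $rK$, as shown by the isotopy in the proof of Lemma~\ref{lem:boundary-degree}. By multiplicativity of degree for composition with a homeomorphism, $\deg(G,\Int B^m,0)=\deg(C_{w_y},\Int B^m,0)$. By \eqref{eq:cone-degree-standard} this equals $\deg w_y=d$.

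Alternatively, I can avoid $G$ entirely. By the boundary characterization, the degree at $0$ equals the degree of $F_y/|F_y|$ restricted to $S^{m-1}$, which is $w_y$, of degree $d$ by Lemma~\ref{lem:boundary-degree}. This version is cleaner and is the one I would write.

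The only delicate point is the orientation and sign bookkeeping, namely that the boundary formula uses the same orientations fixed in Section~\ref{subsec:degree-prelim}. The boundary-normalization route sidesteps $\widehat R_r$ entirely. The one thing to check is that $\deg(F,\Omega,0)$ equals the degree of $F/|F|$ on $\partial B^m$ when $F$ does not vanish on the boundary, which is the standard characterization already invoked for \eqref{eq:cone-degree-standard}.
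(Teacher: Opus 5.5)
Your argument is correct, and the version you say you would write takes a different route from the paper.

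The paper notes that $F_y=\widehat R_r\circ C_{w_y}$ holds identically on $B^m$, not merely on $S^{m-1}$. By homogeneity of $\mathcal A$ one has $F_y(t\theta)=tA_\theta y=t\,r\rho_K(w_y(\theta))\,w_y(\theta)$ for $\theta\in S^{m-1}$. So your map $G$ is literally $F_y$, and the straight-line homotopy in your first sketch is vacuous. From this factorization the paper evaluates the degree at each $v\in\Int(rK)$ directly. It does so via $v_0=\widehat R_r^{-1}(v)$, the composition rule for an orientation-preserving homeomorphism, and \eqref{eq:cone-degree-standard}. That step needs the orientation of $\widehat R_r$, which is why the isotopy in Lemma~\ref{lem:boundary-degree} is included.

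Your final route instead reduces to $v=0$, using connectedness of $\Int(rK)$ and local constancy of the degree off $r\partial K$. You then read the degree off the normalized boundary map $F_y/|F_y|=w_y$. This uses only $\deg w_y=d$ and avoids both the composition rule and any orientation bookkeeping for $\widehat R_r$. The price is a trivial connectivity argument, whereas the paper's factorization handles every interior point uniformly in one identity. The exterior case, $F_y(B^m)\subset rK$, is the same in both.
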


\begin{proof}
Let $C_{w_y}$ be the ordinary cone on $w_y$.  From the definition of $w_y$ and \eqref{eq:gauge-exact},
\[
|A_zy|=r\rho_K(w_y(z)).
\]
Therefore
\begin{equation}\label{eq:cone-factorization}
F_y=\widehat R_r\circ C_{w_y}.
\end{equation}
The homeomorphism $\widehat R_r$ is orientation preserving by Lemma~\ref{lem:boundary-degree}.  If $v\in\Int(rK)$, put $v_0=\widehat R_r^{-1}(v)\in\Int B^m$.  The composition rule for Brouwer degree, \eqref{eq:cone-degree-standard}, and $\deg w_y=d$ give
\[
\deg(F_y,\Int B^m,v)
=
\deg(C_{w_y},\Int B^m,v_0)
=d.
\]
Outside $rK$ the degree is zero because $F_y(B^m)\subset rK$.
\end{proof}

\subsection{The signed degree formula}\label{subsec:polynomiality}

\begin{proposition}[polynomial powers of the model norm]\label{prop:polynomial-powers}
For every integer $j\geq0$, the function
\[
y\longmapsto p_S(y)^{m+2j}
\]
is a homogeneous real polynomial of degree $m+2j$ on $E_{\R}$.
\end{proposition}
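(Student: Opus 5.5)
Apply the signed degree formula \eqref{eq:signed-degree} to $F_y$ on $\Omega=\Int B^m$, with test function $g(v)=|v|^{2j}\mathbf 1_{\R^m}$ restricted to a large ball; equivalently $g(v)=|v|^{2j}$ on a neighbourhood of the bounded set $rK$. The map $F_y$ is Lipschitz on $B^m$ by Lemma~\ref{lem:cone-lipschitz}; extend it by McShane to an open neighbourhood. The image $F_y(\partial B^m)\subset r\partial K$ is null, because it lies on the boundary of a convex body. By Lemma~\ref{lem:cone-degree}, the right-hand side of \eqref{eq:signed-degree} equals
\[
d\int_{rK}|v|^{2j}\,dv = d\,r^{m+2j}\int_K|v|^{2j}\,dv = c_j\,p_S(y)^{m+2j},
\]
where $c_j=d\int_K|v|^{2j}dv>0$ is independent of $y$.

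Next I show that the left-hand side is a polynomial in $y$. Fix a real basis $e_1,\dots,e_{2n}$ of $E_{\R}$ and write $y=\sum_k y_ke_k$. Then
\[
F_y(x)=\sum_k y_k\,\mathcal A(x)e_k
\]
is linear in $y$, with coefficient maps $x\mapsto\mathcal A(x)e_k$ that are Lipschitz and independent of $y$. At each point $x$ of differentiability of all the maps $\mathcal A(\cdot)e_k$ (almost every $x$, by Rademacher), we have $DF_y(x)=\sum_ky_kD(\mathcal A(\cdot)e_k)(x)$. So $\det DF_y(x)$ is a homogeneous polynomial of degree $m$ in $(y_k)$ whose coefficients are bounded measurable functions of $x$. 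Likewise $|F_y(x)|^{2j}$ is a homogeneous polynomial of degree $2j$ in $y$ with bounded measurable coefficients. Integrating over $B^m$ gives a homogeneous real polynomial of degree $m+2j$ in $y$. One point needs care: the exceptional null set must not depend on $y$. This holds because differentiability of $F_y$ is implied by differentiability of the finitely many maps $\mathcal A(\cdot)e_k$, and \eqref{eq:signed-degree} only requires the Jacobian almost everywhere. Dividing by $c_j$ gives the claim for $y\neq0$, and both sides vanish at $y=0$.

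The main obstacle is justifying \eqref{eq:signed-degree} for this $g$. The function $g$ is bounded and Borel with compact support after truncation outside a ball containing $rK$; the truncation does not matter, since $F_y(B^m)\subset rK$. On the right side, the degree is undefined only on $r\partial K$, which is null, and Lemma~\ref{lem:cone-degree} gives the degree everywhere else. The same Jacobian formula applies with the same $d$ for every $y\neq0$, and no sign ambiguity arises because Lemma~\ref{lem:boundary-degree} fixed the orientation.
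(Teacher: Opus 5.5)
Your proposal is correct and follows essentially the same route as the paper. You apply the signed degree formula to $F_y$ with $g(v)=|v|^{2j}$ cut off near $rK$, evaluate the right side by Lemma~\ref{lem:cone-degree}, and use a $y$-independent null set of differentiability points to show that $\int_{B^m}|F_y|^{2j}\det DF_y\,dx$ is a homogeneous polynomial of degree $m+2j$. The differences are cosmetic: you justify that $r\partial K$ is null by the standard fact that boundaries of convex bodies are Lebesgue-null, where the paper uses a Lipschitz covering argument. You also get the common null set by expanding $y$ in a real basis, rather than by viewing $\mathcal A$ as a single map into $\Hom_{\R}(E_{\R},V_{\R})$.
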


\begin{proof}
Fix $y\neq0$ and set $r=p_S(y)$.  Let $\Omega=\Int B^m$.  By Lemma~\ref{lem:cone-lipschitz}, $F_y$ is Lipschitz on $\overline\Omega$.  Extend it coordinatewise to a Lipschitz map on $V_{\R}$ using McShane's theorem \cite{McShane1934}.  We keep the notation $F_y$ for the extension.  Since
\[
F_y(\partial\Omega)\subset r\partial K
\]
and $r\partial K$ has $m$-dimensional Lebesgue measure zero, the signed degree formula \eqref{eq:signed-degree} applies.  Indeed, \eqref{eq:gauge-lip} makes the radial parametrization $S^{m-1}\to r\partial K$ Lipschitz.  Covering $S^{m-1}$ by $O(\varepsilon^{-(m-1)})$ balls of radius $\varepsilon$ shows that its Lipschitz image can be covered by the same order of balls of radius $O(\varepsilon)$; the total $m$-dimensional volume is therefore $O(\varepsilon)$ and tends to zero.

Choose a compactly supported continuous cutoff $\chi$ equal to one on a neighborhood of $rK$ and set
\[
g(v)=\chi(v)|v|^{2j}.
\]
Lemma~\ref{lem:cone-degree} gives
\begin{align}
\int_{B^m}|F_y(x)|^{2j}\det DF_y(x)\,dx
&=d\int_{rK}|v|^{2j}\,dv \notag\\
&=d\,r^{m+2j}\int_K|v|^{2j}\,dv.\label{eq:degree-integral}
\end{align}
The constant on the right is positive apart from the factor $r^{m+2j}$.

It remains to identify the left side as a polynomial in $y$.  Regard
\[
\mathcal A:B^m\to\Hom_{\R}(E_{\R},V_{\R})
\]
as a Lipschitz map into a finite-dimensional Euclidean space.  There is a null set, independent of $y$, outside which $\mathcal A$ is differentiable.  At such a point $x$,
\[
DF_y(x)[h]=(D\mathcal A(x)[h])y.
\]
After choosing real bases, each column of the $m\times m$ matrix $DF_y(x)$ is real-linear in the $2n$ real coordinates of $y$.  Hence
\[
y\longmapsto\det DF_y(x)
\]
is homogeneous polynomial of degree $m$.  Also
\[
y\longmapsto |F_y(x)|^{2j}
=\langle\mathcal A(x)y,\mathcal A(x)y\rangle_{\R}^j
\]
is homogeneous polynomial of degree $2j$.  Their product is therefore homogeneous of degree $m+2j$.

The map $\mathcal A$ is bounded, while $D\mathcal A$ is essentially bounded by its Lipschitz constant.  Thus every coefficient of this polynomial is dominated by an integrable constant on $B^m$, and coefficientwise integration is legitimate.  The left-hand side of \eqref{eq:degree-integral} is consequently a homogeneous polynomial $P_j(y)$ of degree $m+2j$.  Equation \eqref{eq:degree-integral} reads
\[
P_j(y)
=
\left(d\int_K|v|^{2j}\,dv\right)p_S(y)^{m+2j}
\]
for $y\neq0$.  Both sides vanish at the origin, so the identity holds everywhere.  Division by the positive constant proves the proposition.
\end{proof}

\section{Quadratic and Hermitian rigidity}\label{sec:hermitian}

Put $a=n+1$, so $m=2a$.  Proposition~\ref{prop:polynomial-powers} with $j=0,1$ gives nonzero homogeneous real polynomials
\[
P(y)=p_S(y)^{2a},
\qquad
Q(y)=p_S(y)^{2(a+1)}.
\]
Pointwise, and therefore as a polynomial identity,
\begin{equation}\label{eq:ufd-identity}
Q^a=P^{a+1}.
\end{equation}

\begin{lemma}[quadratic rigidity]\label{lem:quadratic-rigidity}
There is a positive-definite real quadratic form $q$ on $E_{\R}$ such that
\[
q(y)=p_S(y)^2
\]
for all $y\in E$.
\end{lemma}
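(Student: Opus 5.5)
We work in the polynomial ring $\R[E_{\R}]$, which is a unique factorization domain, and start from the identity \eqref{eq:ufd-identity}, $Q^a=P^{a+1}$, with $\gcd(a,a+1)=1$. The plan is to show that $P$ is an $a$th power. Then $P=q^a$ for a homogeneous polynomial $q$ of degree $2$, and comparing with $P=p_S^{2a}$ pointwise gives $q=\pm p_S^2$.

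\textbf{Step 1: $P$ is an $a$th power.} Factor $P=c\prod_i f_i^{e_i}$ into pairwise non-associate irreducibles $f_i$ with $c\in\R^{\times}$. Similarly, factor $Q=c'\prod_i f_i^{k_i}$; by \eqref{eq:ufd-identity} the same irreducibles occur. Comparing exponents in $Q^a=P^{a+1}$ gives $ak_i=(a+1)e_i$. Since $\gcd(a,a+1)=1$, we get $a\mid e_i$ for every $i$. Write $e_i=a\ell_i$ and set $q_0=\prod_i f_i^{\ell_i}$. Then $P=c\,q_0^{a}$, with $\deg q_0=2a/a=2$.

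\textbf{Step 2: Absorbing the constant.} We need $c$ to be an $a$th power in $\R$, which is delicate only when $a$ is even. Homogeneous irreducible factors of homogeneous polynomials are homogeneous, so $q_0$ is a quadratic form. Since $P=p_S^{2a}>0$ away from the origin, $c\,q_0^a$ never vanishes there, so $q_0$ has no zeros on the connected set $E_{\R}\setminus\{0\}$ (note $\dim_{\R}E_{\R}=2n\ge4$). Hence $q_0$ is definite. Replacing $q_0$ by $-q_0$ if necessary (which changes $c$ only by the sign $(-1)^a$), we may assume $q_0>0$ off the origin. Then $c=P/q_0^a>0$, and $q=c^{1/a}q_0$ is a positive-definite quadratic form with $q^a=p_S^{2a}$. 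Taking positive $a$th roots pointwise gives $q=p_S^2$.

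\textbf{Main obstacle.} The only real subtlety is the bookkeeping of units and signs in Step 2. Over $\R$ the units are the nonzero constants, so the exponent comparison in Step 1 is clean. The positivity argument in Step 2 is what pins down the sign and lets us take a real $a$th root of the constant.
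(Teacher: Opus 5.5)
Your argument is correct and uses the same idea as the paper: unique factorization in $\R[E_{\R}]$, together with $\gcd(a,a+1)=1$ applied to the multiplicities in $Q^a=P^{a+1}$. The paper's finish is shorter. From $\nu_\pi(Q)=(a+1)\ell_\pi\geq a\ell_\pi=\nu_\pi(P)$ it gets $P\mid Q$ and sets $q=Q/P$, so $q=p_S^{2(a+1)}/p_S^{2a}=p_S^2$ holds pointwise at once, and the sign-and-root bookkeeping in your Step~2 is not needed.
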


\begin{proof}
The real polynomial ring on $E_{\R}$ is a unique factorization domain; see, for example, \cite[Section~9.3, Theorem~7]{DummitFoote2004}.  For each irreducible factor $\pi$, write $\nu_{\pi}(P)$ and $\nu_{\pi}(Q)$ for its multiplicities.  Equation \eqref{eq:ufd-identity} gives
\[
a\nu_{\pi}(Q)=(a+1)\nu_{\pi}(P).
\]
Since $a$ and $a+1$ are coprime, there is an integer $\ell_{\pi}\geq0$ with
\[
\nu_{\pi}(P)=a\ell_{\pi},
\qquad
\nu_{\pi}(Q)=(a+1)\ell_{\pi}.
\]
Thus every irreducible factor of $P$ occurs in $Q$ with at least the same multiplicity, and $P$ divides $Q$.  The quotient
\[
q=Q/P
\]
is a homogeneous polynomial of degree two.  For $y\neq0$,
\[
q(y)=\frac{p_S(y)^{2(a+1)}}{p_S(y)^{2a}}=p_S(y)^2,
\]
and homogeneity gives the equality at $0$.  Because $p_S$ is a norm, $q(y)>0$ for $y\neq0$.
\end{proof}

A real quadratic form is not yet the desired conclusion: we must recover the complex structure.

\begin{lemma}[Hermitian return]\label{lem:hermitian-return}
The quadratic form $q$ in Lemma~\ref{lem:quadratic-rigidity} is the diagonal of a positive-definite Hermitian form on $E$.  Consequently $S$ is a complex ellipsoid.
\end{lemma}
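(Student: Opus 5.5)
We polarize $q$ and then use phase invariance. Let $b$ be the symmetric real bilinear form on $E_{\R}$ with $b(y,y)=q(y)$, so $b(x,y)=\tfrac12\bigl(q(x+y)-q(x)-q(y)\bigr)$. By Lemma~\ref{lem:quadratic-rigidity}, $b$ is positive definite. Since $S$ is the unit ball of a complex norm, it is balanced, and $p_S(e^{it}y)=p_S(y)$. In particular $q(Jy)=q(y)$ for all $y$. Polarizing this identity gives
\[
b(Jx,Jy)=\tfrac12\bigl(q(J(x+y))-q(Jx)-q(Jy)\bigr)=b(x,y),
\]
so $J$ is $b$-orthogonal. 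Since $J^2=-I$, this also gives $b(Jx,y)=-b(x,Jy)$, and hence $b(Jx,x)=0$.

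Next we define the candidate Hermitian form
\[
h(x,y)=b(x,y)+i\,b(x,Jy).
\]
We then check the following properties in order.
\begin{itemize}
\item Real bilinearity is immediate.
\item For complex linearity in the first variable, it suffices to check $h(Jx,y)=ih(x,y)$. We have $h(Jx,y)=b(Jx,y)+ib(Jx,Jy)=-b(x,Jy)+ib(x,y)$, which equals $i\bigl(b(x,y)+ib(x,Jy)\bigr)$.
\item For conjugate symmetry, $\overline{h(y,x)}=b(y,x)-ib(y,Jx)=b(x,y)+ib(x,Jy)$, using the antisymmetry of $J$ with respect to $b$.
\item On the diagonal, $h(x,x)=b(x,x)+ib(x,Jx)=q(x)$, because $b(x,Jx)=0$.
\end{itemize}
Positive definiteness of $h$ then follows from that of $q$.

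Finally, $S=\{y:p_S(y)\le1\}=\{y:h(y,y)\le1\}$, so $S$ is a Hermitian ellipsoid. If a unitary normal form is wanted, diagonalize $h$ relative to the auxiliary inner product by a complex-linear change of basis.

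I expect no real obstacle here. The only point that needs care is using the full phase invariance $q(Jy)=q(y)$, rather than just the real symmetry $q(-y)=q(y)$; that is the step that makes $b$ $J$-invariant and hence $h$ complex linear. All of this is elementary linear algebra once Lemma~\ref{lem:quadratic-rigidity} is available.
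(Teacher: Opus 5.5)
Your proposal is correct and follows the paper's proof: the paper defines $h(x,y)=B(x,y)-iB(Jx,y)$, which coincides with your $h(x,y)=b(x,y)+i\,b(x,Jy)$ because $b(x,Jy)=-b(Jx,y)$. It then verifies the same properties (complex linearity in the first variable, conjugate symmetry, $h(y,y)=q(y)$) from the same $J$-invariance of the polarized form obtained from $q(Jy)=q(y)$.
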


\begin{proof}
Let $B$ be the symmetric real bilinear form obtained by polarizing $q$:
\[
B(x,y)=\frac{q(x+y)-q(x)-q(y)}{2}.
\]
Since $S$ is the unit ball of a complex norm,
\[
p_S(iy)=p_S(y),
\]
and therefore $q(Jy)=q(y)$.  Polarization gives
\begin{equation}\label{eq:J-bilinear}
B(Jx,Jy)=B(x,y),
\qquad
B(Jx,y)=-B(x,Jy).
\end{equation}
Define
\[
h(x,y)=B(x,y)-iB(Jx,y).
\]
The identities \eqref{eq:J-bilinear} show that $h$ is complex linear in its first variable and conjugate linear in its second.  Symmetry of $B$ gives $h(y,x)=\overline{h(x,y)}$.  Moreover $B(Jy,y)=0$, so
\[
h(y,y)=B(y,y)=q(y)=p_S(y)^2>0
\]
for $y\neq0$.  Thus $h$ is positive-definite Hermitian and
\[
S=\{y:h(y,y)\leq1\}.
\]
\end{proof}

We can now finish the finite-dimensional theorem.

\begin{proof}[Proof of Theorem~\ref{thm:hyperplane}]
By Lemma~\ref{lem:hermitian-return}, the normalized model section $S$ is a complex ellipsoid.  Since every hyperplane section $K\cap H$ is complex-linearly equivalent to $S$, each restricted norm on a hyperplane is induced by a Hermitian inner product.  Hence, for $x,y$ in a common complex hyperplane,
\begin{equation}\label{eq:parallelogram}
p_K(x+y)^2+p_K(x-y)^2
=2p_K(x)^2+2p_K(y)^2.
\end{equation}

Now take arbitrary $x,y\in V$.  Their complex span has dimension at most two.  Since $n\geq2$ and $\dim_{\C}V=n+1$, this span is contained in an $n$-dimensional complex hyperplane.  Thus \eqref{eq:parallelogram} holds for every pair $x,y\in V$.  By the Jordan--von Neumann characterization \cite{JordanVonNeumann1935}, the norm $p_K$ comes from an inner product; in the complex case the standard complex polarization identity yields a Hermitian inner product.  Therefore $K$ is a Hermitian ellipsoid.
\end{proof}

\section{The ambient Banach space}\label{sec:ambient}

\begin{proof}[Proof of Corollary~\ref{cor:main}]
Let $x,y\in X$.  Since $2\leq n<\dim_{\C}X$, the complex span of $x$ and $y$ is contained in some $(n+1)$-dimensional complex subspace $Y\subset X$.  Every complex hyperplane of $Y$ is an $n$-dimensional subspace of $X$, so the hypothesis says that these hyperplanes, with their restricted norms, are mutually complex-linearly isometric.  Theorem~\ref{thm:hyperplane} implies that $Y$ is a complex Hilbert space.  Hence the parallelogram identity holds for the original pair $x,y$.

Since $x$ and $y$ were arbitrary, the norm of $X$ satisfies the parallelogram identity everywhere.  The Jordan--von Neumann polarization formula \cite{JordanVonNeumann1935} therefore defines a Hermitian inner product inducing the norm.  Thus $X$ is a complex Hilbert space.
\end{proof}

\begin{remark}[comparison with the real argument]\label{rem:final}
Lu and Yang's real proof uses the finiteness of $\pi_{n-1}(G^{\circ})$ when the section dimension $n$ is odd and obtains the polynomial powers $p_S^{n+1}$ and $p_S^{n+3}$ \cite[Sections~3--4]{LuYang2026}.  In the complex case the normal sphere has real dimension $2n+1$, so the obstruction group is $\pi_{2n}(G^{\circ})$ and the first two powers are $p_S^{2n+2}$ and $p_S^{2n+4}$.  Both parity requirements are therefore automatic.  The additional final step is the passage from the resulting real quadratic form to a Hermitian form using the $S^1$-invariance of the complex norm.
\end{remark}

\section*{Statement and declaration}
The work is assisted by TARS agent system via exploratory reasoning. X. Dai supplemented critical argument details, refined the manuscript logic, and completed the writing.

\end{document}